\newcolumntype{2}{D{.}{}{2.0}}
   \def\<{{\langle}} 
  \def\>{{\rangle}}
  \def\note#1{{}}
  \def\note#1{}
  \def\beq{\begin{equation}} 
  \def\eeq{\end{equation}}
  \def\id{\mathrm{id}}
  \newcounter{zlist}
  \newcounter{blist}
  \newcounter{rlist}
\def\stac#1{\raise-.2cm\hbox{$\stackrel{\displaystyle\otimes}{\scriptscriptstyle{#1}}$}}
\def\cten#1{\raise-.2cm\hbox{$\stackrel{\displaystyle\reallywidehat{\otimes}}
{\scriptscriptstyle{#1}}$}}
  \def\Label#1{\label{#1}\ifmmode\llap{[#1] }\else 
  \marginpar{\smash{\hbox{\tiny [#1]}}}\fi} 
  \def\Label{\label}
  \newtheorem{proposition}{Proposition}[section]
  \newtheorem{lemma}[proposition]{Lemma} 
  \newtheorem{corollary}[proposition]{Corollary} 
  \newtheorem{theorem}[proposition]{Theorem} 
\theoremstyle{definition} 
  \newtheorem{definition}[proposition]{Definition}
  \newtheorem{example}[proposition]{Example}
  \theoremstyle{remark} 
  \newtheorem{remark}[proposition]{Remark}
  \newcounter{c} 
  \renewcommand{\[}{\setcounter{c}{1}$$} 
  \newcommand{\etyk}[1]{\vspace{-7.4mm}$$\begin{equation}\Label{#1} 
  \addtocounter{c}{1}} 
  \renewcommand{\]}{\ifnum \value{c}=1 $$\else \end{equation}\fi} 
   \numberwithin{equation}{section}
\newcommand{\Cc}{\mathcal{C}}
\newcommand{\sz}[2]{\sigma^{z}_{#1}(#2)}
\newcommand{\sn}[2]{\sigma^{1}_{#1}(#2)}
\newcommand{\tz}[2]{\tau^{z}_{#1}(#2)}
\newcommand{\tn}[2]{\tau^{1}_{#1}(#2)}
\def\*C{{}^*\hspace*{-1pt}{\Cc}}
\def\text#1{{\rm {\rm #1}}}
 \def\h{\mathbf{h}}
 \def\1{\mathbf{1}}
\def\id{\mathrm{id}}
\def\1\mathbf{1}
\def\|#1{\overline{#1}}
\def\h#1 {\hat{#1}}
\newcommand\reallywidehat[1]{%
\savestack{\tmpbox}{\stretchto{%
  \scaleto{%
    \scalerel*[\widthof{\ensuremath{#1}}]{\kern.1pt\mathchar"0362\kern.1pt}%
    {\rule{0ex}{\textheight}}
  }{\textheight}%
}{2.4ex}}%
\stackon[-6.9pt]{#1}{\tmpbox}%
}
\begin{document}

\title[Novel non-involutive solutions of the YBE from (skew) braces]{Novel non-involutive solutions of the Yang-Baxter equation from (skew) braces}

\begin{abstract}
We produce novel non-involutive solutions of the Yang-Baxter equation coming from (skew) braces. These solutions are generalisations of the known ones coming from braces and skew braces,  and surprisingly in the case of braces they are not necessarily involutive.  In the case of two-sided (skew) braces one can assign such solutions to every element of the set. Novel bijective maps associated to the inverse solutions are also introduced. Moreover,  we show that the recently derived  Drinfeld twists of the involutive case are still admissible in the non-involutive frame and we identify the twisted $r$-matrices and twisted coproducts. We observe,  as in the involutive case  that the underlying quantum algebra is not a quasi-triangular bialgebra,  as one would expect, but a quasi-triangular quasi-bialgebra. The same applies to the quantum algebra of the twisted $r$-matrices, contrary to the involutive case.
\end{abstract}

\author{Anastasia Doikou}

\author{Bernard  Rybo{\l}owicz}

\address{(A.Doikou $\&$ B. Rybo{\l}owicz)
Department of Mathematics, 
Heriot-Watt University, 
Edinburgh EH14 4AS,  and Maxwell Institute for Mathematical Sciences, Edinburgh, UK}

\email{A.Doikou@hw.ac.uk,\ B.Rybolowicz@hw.ac.uk}

\subjclass[2020]{16T25; 16Y99; 08A99}

\keywords{Braces; groups; Yang-Baxter equation; quasi-triangular quasi-bialgebras}
\baselineskip=15pt
\date\today
\maketitle

\section*{Introduction}
\noindent 
The idea of set-theoretic solutions of the Yang-Baxter equation (YBE) \cite{Baxter, Yang}
was first  introduced and studied by Drinfeld in early 90s
\cite{Drin} and ever since a significant progress has been made on this topic 
(see for instance \cite{ESS, Eti}). 
In 2007 Rump \cite{[26]}  identified certain algebraic structures called left braces and showed that with every left brace one 
can associate an involutive solution of the set-theoretic Yang-Baxter equation, and conversely from every involutive 
solution one can construct a left brace, such that the solution given by the brace, restricted to an appropriate subset,  
is a set-theoretic solution. This generated an 
increased research activity on left braces and  set-theoretic of the YBE (see for instance \cite{Ba, bcjo, CJV, [6],fc}), 
and in 2017 Guarnieri  and  Vendramin  \cite{GV}  extended Rump's construction to left skew braces in order to 
produce non-degenerate,  non-involutive solutions.
This generalization led to a trend of relaxing more conditions of braces to produce yet more general classes of solutions 
(see e.g. \cite{Catino, CMS, JKA, kava, KSV, Lebed, SVB}, \cite{GI3}--\cite{gateva}).

In the first part of the present study (Sections 1 and 2) we introduce a new way of constructing solutions of the 
YBE from left skew braces. In 
contrast to already known methods, this one allows one to associate to left (skew) braces
more than one solution,  not necessarily involutive.  To every two-sided (skew) brace we can associate as many solutions 
as there are elements. These solutions are not necessarily all different, but it is quite common for them to be distinct, 
(see Example \ref{ex:fractions} and \ref{ex:cyclicbraces}).  Due to the dependence of the solution on the choice 
of an element of the set, we obtain
parameter dependent families of solutions. The striking observation here is that even in the case of braces 
one may obtain non-involutive solutions!  In fact, we show that for generic values of the parameter there is no map to relate our parametric solutions with the known solutions appearing in \cite{[26], GV}.

In the second part (Sections 3 and 4) we study the underlying quantum algebras 
associated to non-involutive set-theoretic solutions 
and discuss the notion of  admissible Drinfeld twist.
Admissible twists ${\mathcal F}$, which link distinct  (quasi-triangular) Hopf or quasi-Hopf algebras 
were  introduced by Drinfeld in \cite{Drintw}. 
Whenever the notion of the  twist is discussed in Drinfeld's original work and in the literature in general
a restrictive action of the co-unit on the twist is almost always assumed, i.e.
 $(\mbox{id} \otimes \epsilon) {\mathcal F} = (\epsilon \otimes \mbox{id} ) {\mathcal F}=1_{\mathcal A}$ (${\mathcal A}$ 
is the associated quantum algebra).  We should note however that Drinfeld in \cite{Drintw}  uses certain simple twists 
without this restricted counit action to twist quasi-bialgebras with nontrivial unit constraints to 
quasi-bialgebras with trivial unit constraints.  

Recently it was shown in \cite{Doikoutw} that all involutive, set-theoretic solutions of the YBE 
can be obtained from the permutation operator via a suitable twist that was explicitly derived 
and its admissibility was proven.
It was noted in \cite{Doikoutw}, using a special class of set-theoretic solutions,
that the corresponding quantum algebra was not co-associative and the related associator 
was derived. This was indeed the first hint that set-theoretic solutions of the YBE give rise to quasi-bialgebras.
Later the idea of non co-associativity for involutive solutions was further explored in \cite{DOGHVL} 
and the generic action of the co-unit on the twist was considered,  leading  
to the conclusion that the underlying quantum algebra for involutive 
set-theoretic solutions (see also \cite{DoiSmo1, DoiSmo2}) is a quasi-triangular quasi-bialgebra.  
We extend these findings here to the non-involutive case.

We describe below in more detail the outline and the main findings of this study.

\begin{enumerate}

\item In Section 1 we recall basic definitions and known results about (skew) 
braces and set-theoretic solutions of the Yang-Baxter equation.

\item In Section 2 we introduce novel set-theoretic solutions of the YBE
associated to generalized left skew braces $B$, that depend on an extra parameter $z\in B,$ 
(Theorem \ref{prop:main}) i.e.  we obtain a parameter dependent family of solutions.
For $z=1$, one recovers the solution given by Guarnieri and Vendramin in \cite{GV}. 
We also show that our $z$-deformed  solution is involutive if and only if the skew brace 
is an abelian additive group and $z$ belongs to the socle of the brace,
that is the solution reduces to Rump's solution given in \cite{[26]}.  One of the most striking findings in this section is 
Corollary 3.5, which states that if $B$ is a skew brace with certain properties, then for any element $z\in B$, 
we can associate a $z$-deformed solution. 
Another key observation, as already mentioned,  is that even in the case of braces 
one may obtain non-involutive solutions.  
We also introduce novel bijective maps associated to the inverse $r$-matrices (also solutions of the YBE).
We conclude the section with 
several examples which show that not all deformed solutions are different, but it is not common 
for two deformed solutions 
to be the same.  
We are able to present examples of non-involutive solutions associated to braces.

\item In Section 3 we briefly review key definitions about the notions of the quasi-triangular 
quasi-bialgebras and Drinfeld twists as well as some of the main findings of \cite{DOGHVL}, 
useful for the analysis of Section 4.

\item In Section 4 we discuss the  quantum algebras and the notion of Drinfeld twists for non-degenerate, 
non-involutive solutions of the YBE.  Specifically,  we first provide a brief review
on the tensor notation for set-theoretic $r$-matrices (see also e.g. \cite{DoiSmo1}) 
as well as the main results on admissible Drinfeld twists and the associated quasi-bialgebras
for involutive set-theoretic solutions \cite{Doikoutw, DOGHVL}.  We then move one to our aim,  
which is the generalization of the results of \cite{Doikoutw, DOGHVL}  to the non-involutive scenario and the characterization of the associated quantum algebra as  a quasi-bialgebra.
This is achieved by introducing  certain  families of elements, which after suitable twisting become group-like elements. 
The form and the coproduct structure of these elements are inspired by tensor representations of the so called RTT algebra \cite{FadTakRes, Doikoutw}. The existence of these families of elements is fundamental in allowing us to conclude 
that for any set-theoretic solution of the YBE, the underlying 
quantum algebra is a quasi-bialgebra. 

\end{enumerate}

\section{Preliminaries}
\noindent 
We begin the section by recalling basic definitions and ideas about  
set-theoretic solutions of the Yang-Baxter equation and (skew) braces.
For a set-theoretic solution of the braid equation, we will use the notation $(X, \check{r})$, 
instead of the usual notation  $(X,r)$,  to be consistent with the notation used in quantum integrable systems.

Let $X=\{x_{1}, \ldots, x_n\}$ be a set and ${\check r}:X\times X\rightarrow X\times X$.
Denote 
\begin{equation}
{\check r}(x,y)=(\sigma _{x}(y), \tau _{y}(x)). \label{setY}
\end{equation}
We say that $\check r$ is non-degenerate if 
$\sigma _{x}$ and $\tau _{y}$ 
are bijective maps, and
$(X, {\check r})$ is a set-theoretic solution of the braid equation  if
\begin{equation}
({\check r}\times \mbox{id})( \mbox{id} \times {\check r})({\check r}\times \mbox{id})=(\mbox{id}\times 
{\check r})({\check r}\times \mbox{id})(\mbox{id}\times {\check r}).\label{braid}
\end{equation}
The map $\check r$ is called involutive if $\check r^2=\mbox{id}.$

The following notion of homomorphism and isomorphism between solutions will be useful.
\begin{definition}\cite[Section 3]{[6]}\label{def11}
Let $X,S$ be sets and $\check r:S\times S\to S\times S$ and $\hat r:X\times X\to X\times X$ be two solutions of the braid equation. Then a function $f: S\to X$ is called a {\em homomorphism of Yang-Baxter solutions} if it satisfies the following equality
$$
(f\times f)\check r=\hat r(f\times f).
$$
We say that $f$ is an {\em isomorphism} if $f$ is a bijection.
\end{definition}

We also introduce the map $r: X\times X\rightarrow X\times X,$ such that $r = \check r\pi,$ 
where $\pi: X\times X\rightarrow X\times X$ 
is the flip map: $\pi(x,y) = (y,x).$ Hence, $r(y,x) =( \sigma_x(y), \tau_y(x)),$ and it satisfies the YBE:
\begin{equation}
r_{12}\ r_{13}\  r_{23} =r_{23}\ r_{13}\ r_{12},  \label{YBE}
\end{equation} 
where we denote $r_{12}(y,x,z) = (\sigma_x(y), \tau_y(x),z),$ $r_{23}(z,y,x) = (z, \sigma_x(y), \tau_y(x))$ 
and  $r_{13}(y,z,x) = (\sigma_x(y), z,\tau_y(x)).$

We note that a function satisfying \eqref{braid}, is usually called in literature a set-theoretic solution of the Yang-Baxter equation or the pair $(X,\check{r})$ is called a braided set \cite{ESS,Gateva,gateva}. Also, a function satisfying \eqref{YBE}, can be found in literature under the name set-theoretic solution of the quantum Yang-Baxter equation (QYBE) \cite{ESS, [25],[26]}. In this paper a pair $(X, \check r),$ which satisfies (\ref{braid} is called a set-theoretic solution of the braid equation, whereas a pair $(X, r),$ which satisfies (\ref{YBE}) is called a set-theoretic solution of the YBE.

We recall the definitions of the algebraic structures that provide set-theoretic solutions of the braid equation, 
such as left skew braces and  braces.  We also present some key properties associated 
to these structures that will be useful when formulating some of the main findings of the present study, summarized in Section 4.

\begin{definition}[\cite{[6], GV}]
A {\it left skew brace} is a set $B$ together with two group operations $+,\circ :B\times B\to B$, 
the first is called addition and the second is called multiplication, such that for all $ a,b,c\in B$,
\begin{equation}\label{def:dis}
a\circ (b+c)=a\circ b-a+a\circ c.
\end{equation}
If $+$ is an abelian group operation $B$ is called a {\it left brace}.
Moreover, if $B$ is a left skew brace and for all $ a,b,c\in B$ $(b+c)\circ a=b\circ a-a+c \circ a$, then $B$ is called a 
{\it skew brace} or more commonly in literature {\it two-sided skew brace}. Analogously if $+$ is abelian and $B$ is a skew brace, then $B$ is called a {\it brace}.
\end{definition}

 The additive identity of a left skew brace $B$ will be denoted by $0$ and the multiplicative identity by $1$. 
In every left skew brace $0=1$.

Rump showed the following powerful theorem for  involutive set-theoretic solutions.
\begin{theorem}\label{Rump}(Rump's theorem, \cite{[25], [26], [6]}).  
Assume  $(B, +, \circ)$ is a left brace. If the map  $\check r_B: B\times B \to B \times B$ is defined as 
${\check r}_B(x,y)=(\sigma _{x}(y), \tau _{y}(x))$, where $\sigma _{x}(y)=x\circ y-x$, $\tau _{y}(x)=t\circ x-t$, and $t $ 
is the inverse of $\sigma _{x}(y)$ in the circle group $(B, \circ ),$ then  $(B, \check r_B)$ is an involutive,  
non-degenerate solution of the braid equation.\\
Conversely,  if $(X,\check r)$ is an involutive, non-degenerate solution of the braid equation, then there exists a left brace $(B,+, \circ)$ 
(called an  underlying brace of the solution $(X, \check r)$) such that $B$ contains $X,$ $\check r_B(X\times X )\subseteq X \times X,$
and the map $\check r$ is isomorphic to the restriction of $\check r_B$ to $X \times X.$ Both the additive $(B, +)$ 
and multiplicative $(B,\circ)$ groups of the left brace $(B,+, \circ)$ are generated by $X.$
\end{theorem}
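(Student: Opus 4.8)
The plan is to prove the two directions by quite different means: the forward direction (brace $\Rightarrow$ solution) by a direct structural computation organised around the \emph{lambda map}, and the converse by the structure-group construction of Etingof--Schedler--Soloviev. For the forward direction I would first isolate the one structural fact that drives everything. Setting $\lambda_a(b):=a\circ b-a$ so that $\sigma_x=\lambda_x$, the left brace axiom \eqref{def:dis} together with commutativity of $+$ shows immediately that each $\lambda_a$ is an additive endomorphism fixing $0$. A short calculation then gives $\lambda_1=\id$ and $\lambda_{a\circ b}=\lambda_a\lambda_b$, so that $\lambda\colon(B,\circ)\to\Aut(B,+)$ is a group homomorphism; in particular every $\sigma_x=\lambda_x$ is bijective, which is left non-degeneracy. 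Writing $t=\sigma_x(y)^{-1}$ for the circle inverse, the homomorphism property rewrites the definition of $\tau$ in the compact form $\tau_y(x)=\lambda_{\sigma_x(y)}^{-1}(x)$, which I would use throughout.

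With this in hand involutivity is essentially immediate: putting $u=\sigma_x(y)$ and $v=\tau_y(x)$ one computes $\sigma_u(v)=\lambda_{\sigma_x(y)}\bigl(\lambda_{\sigma_x(y)}^{-1}(x)\bigr)=x$ and then $\tau_v(u)=\lambda_{\sigma_u(v)}^{-1}(u)=\lambda_x^{-1}(\lambda_x(y))=y$, so $\check r_B^2=\id$. The engine for the braid equation is the single identity
\[ \sigma_x(y)\circ\tau_y(x)=x\circ y, \]
which follows from $s\circ\lambda_s^{-1}(x)=\lambda_s(\lambda_s^{-1}(x))+s=x+s$ (with $s=\sigma_x(y)$) and $x+\sigma_x(y)=x\circ y$ in the abelian group $(B,+)$. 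Feeding this into the standard compatibility conditions for a non-degenerate map of the form \eqref{setY}, the ``left'' condition $\sigma_x\sigma_y=\sigma_{\sigma_x(y)}\sigma_{\tau_y(x)}$ becomes $\lambda_{x\circ y}=\lambda_{\sigma_x(y)\circ\tau_y(x)}$, which now holds identically; the remaining compatibility conditions reduce, via involutivity and the same homomorphism identity, to the same fact. Right non-degeneracy is the one point here that genuinely uses the brace structure rather than $\sigma$ alone: solving $\tau_y(x)=w$ with the boxed identity forces $(x\circ y)^{-1}=w^{-1}+y^{-1}$ in the circle group, which pins down $x\circ y$, hence $x$, uniquely, so each $\tau_y$ is bijective. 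This completes the verification that $(B,\check r_B)$ is a non-degenerate involutive solution.

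For the converse I would invoke the structure group. Given an involutive, non-degenerate $(X,\check r)$, form $G=\langle X \mid xy=\sigma_x(y)\,\tau_y(x)\rangle$. The crucial input is a \emph{bijective $1$-cocycle} $G\to\ZZ^{(X)}$ onto the free abelian group on $X$; producing it is precisely where involutivity and non-degeneracy are indispensable, and it is the heart of the Etingof--Schedler--Soloviev analysis. Transporting the additive structure of $\ZZ^{(X)}$ back along this bijection equips $G$ with an operation $+$ that, together with its group multiplication $\circ$, satisfies \eqref{def:dis} with $+$ abelian, i.e. makes $(G,+,\circ)$ a left brace. One then checks that the canonical map $X\to G$ is injective, that $\check r_G(X\times X)\subseteq X\times X$, and that $\check r_G$ restricts to $\check r$ on $X$; since $(G,+)$ is free abelian on the image of $X$ and $(G,\circ)$ is generated by $X$ as a group, both groups of the brace are generated by $X$.

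I expect the forward direction to be essentially mechanical once the homomorphism property of $\lambda$ and the identity $\sigma_x(y)\circ\tau_y(x)=x\circ y$ are in place, the only subtle point being right non-degeneracy. The genuine obstacle is the converse: the construction of the structure group together with the existence and bijectivity of the $1$-cocycle, which is the step carrying the real content of the equivalence.
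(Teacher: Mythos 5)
The paper does not actually prove this statement: Theorem \ref{Rump} is recalled from the literature (Rump \cite{[25], [26]} and Ced\'o--Jespers--Okni\'nski \cite{[6]}) and stated without proof, so there is no in-paper argument to compare yours against; I can only assess your proposal on its own terms. It is the standard argument and it is essentially sound. In the forward direction, the homomorphism property $\lambda_{a\circ b}=\lambda_a\lambda_b$, the rewriting $\tau_y(x)=\lambda_{\sigma_x(y)}^{-1}(x)$, the key identity $\sigma_x(y)\circ\tau_y(x)=x\circ y$ and the involutivity computation are all correct, and your right-non-degeneracy criterion $(x\circ y)^{-1}=\tau_y(x)^{-1}+y^{-1}$ does hold: with $u=x\circ y$ one has $\tau_y(x)^{-1}=u^{-1}\circ(u-x)=u^{-1}-u^{-1}\circ x=u^{-1}-y^{-1}$, using the left brace axiom and commutativity of $+$, and this determines $x$ uniquely from $y$ and $\tau_y(x)$. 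The one spot where you assert rather than prove is the claim that the remaining braid constraints, i.e.\ the analogues of (\ref{C2}) and (\ref{C3}), ``reduce to the same fact'': they do, but this needs a genuine computation --- for instance (\ref{C2}) is equivalent, after writing $\tau_b(a)=\sigma_a(b)^{-1}\circ a\circ b$, to $\sigma_\eta(x)\circ\sigma_{\tau_x(\eta)}(y)=\sigma_\eta(\sigma_x(y))\circ\sigma_{\tau_{\sigma_x(y)}(\eta)}(\tau_y(x))$, which then follows from $a\circ b=\lambda_a(b)+a$, the homomorphism property and abelianness of $+$; this is exactly the $z=1$, abelian instance of Proposition \ref{lem:long}(6), so a complete write-up should include it. For the converse, your route (structure group plus bijective $1$-cocycle into $\mathbb{Z}^{(X)}$, transporting the free abelian addition back) is the correct and standard one, but you invoke the existence and bijectivity of the cocycle from \cite{ESS} rather than proving it; since, as you yourself note, that step carries the real content of the equivalence, your proposal establishes the converse only modulo that citation. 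Given that the paper itself cites rather than proves the theorem, this level of completeness is defensible, but you should present the second half explicitly as a proof relying on \cite{ESS}, not as a self-contained argument.
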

The following fact was also noticed by Rump.
\begin{remark} \label{nilpotent}
 Let $(N, +, \cdot)$ be an associative ring.  If for $a,b\in N$ we define 
 \[a\circ b=a\cdot b+a+b,\] then $(N, +, \circ )$ is a brace if and only if $(N, +, \cdot)$ is a radical ring.
\end{remark}

Guarnieri and Vendramin \cite{GV}, extended Rump's result to left skew braces and non-degenerate, non-involutive solutions.
\begin{theorem}[\cite{GV} Theorem 3.1]\label{thm:GV}
Let $B$ be a left skew brace, then the map $\check{r}_{GV}:B\times B\to B\times B$ given  for all $a,b\in B$ by
$$
\check{r}_{GV}(a,b)=(-a+a\circ b,\ (-a+a\circ b)^{-1}\circ a\circ b)
$$
is a non-degenerate solution of the braid equation.  
\end{theorem}

\section{Extended non-involutive solutions of the YBE from (skew) braces}

 \noindent We are going to consider in this section some generalized version of the set-theoretic solution (\ref{setY})
by introducing some kind of ``$z$-deformation''.  Indeed, let $z\in X$ be fixed, then we denote 
\begin{equation}
\check r_z (x,y) = (\sigma^z_{x}(y),\tau^z_{y}(x)). \label{genr}
\end{equation}
We say that $\check r$ is non-degenerate if $\sigma_x^z$ and $\tau_y^z$ are bijective maps.
We review below the constraints arising by requiring $(X, \check r_z)$ to be a solution 
of the braid equation (\cite{Drin,  ESS, [25], [26]}). Let,
\[({\check r}\times \mbox{id})(\mbox{id}\times {\check r})({\check r}\times \mbox{id})(\eta, x, y)=(L_1, L_2, L_3),\]
\[(\mbox{id}\times {\check r})({\check r}\times \mbox{id})(\mbox{id}\times {\check r})(\eta,x,y)= (R_1, R_2, R_3),\]
where, after employing expression  (\ref{genr}) we identify:
\begin{eqnarray}
L_1 = \sigma^z_{\sigma^z_{\eta}(x)}(\sigma^z_{\tau^z_{x}(\eta)}(y)),\quad L_2 = 
\tau^z_{\sigma^z_{\tau^z_x(\eta)}(y)}(\sigma^z_{\eta}(x)), \quad L_3 =\tau^z_y(\tau^z_x(\eta)),\nonumber
\end{eqnarray}
\begin{eqnarray}
R_1=\sigma^z_{\eta}(\sigma^z_x(y)), \quad R_1=\sigma^z_{\tau^z_{\sigma^z_x(y)}(\eta)}(\tau^z_{y}(x)), \quad 
R_3= \tau^z_{\tau^z_{y}(x)}(\tau^z_{\sigma^z_{x}(y)}(\eta)). \nonumber
\end{eqnarray}
And by requiring $L_i =R_i,$ $i\in \{1,2,3\}$ we obtain the following fundamental constraints for the associated maps:
\begin{eqnarray}
&&  \sigma^z_{\eta}(\sigma^z_x(y))= \sigma^z_{\sigma^z_{\eta}(x)}(\sigma^z_{\tau^z_{x}(\eta)}(y)), \label{C1}\\
&& \tau^z_y(\tau^z_x(\eta)) = \tau^z_{\tau^z_{y}(x)}(\tau^z_{\sigma^z_{x}(y)}(\eta)), \label{C2}\\
&&  \tau^z_{\sigma^z_{\tau^z_x(\eta)}(y)}(\sigma^z_{\eta}(x))= \sigma^z_{\tau^z_{\sigma^z_x(y)}(\eta)}(\tau^z_{y}(x)). \label{C3}
\end{eqnarray}
Note that the constraints above are essentially the ones for the set-theoretic solution (\ref{setY}), given that $z$ 
is a fixed element of the set,  i.e.  for different elements $z$ we obtain in principle distinct solutions of the braid equation.

We will introduce in what follows suitable algebraic structures that satisfy 
the fundamental constraints above,  i.e.  provide solutions of the braid equation 
and generalize the findings of Rump and Guarnieri $\&$ Vendramin. 
Before we state and prove our main Theorem \ref{prop:main},  we first show some useful properties 
of left skew braces.
\begin{lemma}\label{lem:ter}
Let $B$ be a set with two group operations $+,\circ $ with the same neutral element $1$. 
Then the condition \eqref{def:dis} is equivalent to the following condition: 
$$
a\circ (b-c+d)=a\circ b-a\circ c+a\circ d,
$$
for all $a,b,c,d\in B$
\end{lemma}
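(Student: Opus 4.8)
The statement is a biconditional between two identities, so the plan is to prove the two implications separately; the reverse implication is a quick specialisation, while the forward one carries the real content and will need care because the additive group $+$ is \emph{not} assumed abelian.

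For the direction ``four-term identity $\Rightarrow$ \eqref{def:dis}'' I would simply specialise $c=1$. Since $+$ and $\circ$ share the neutral element $1$, the additive inverse of $1$ is $1$ itself, so $b-1+d=b+d$, and on the right $a\circ 1=a$. The four-term identity then collapses to $a\circ(b+d)=a\circ b-a+a\circ d$, which is precisely \eqref{def:dis} after renaming $d$ to $c$.

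For the substantive direction ``\eqref{def:dis} $\Rightarrow$ four-term identity'' I would first record how $\circ$ interacts with additive inverses. Substituting $c=-b$ into \eqref{def:dis} and using $b+(-b)=1$ together with $a\circ 1=a$ gives $a=a\circ b-a+a\circ(-b)$; solving for $a\circ(-b)$ — and here I must respect that in a possibly non-abelian group the inverse of a sum reverses the order of terms — produces the key formula $a\circ(-b)=a-a\circ b+a$. With this in hand I would compute $a\circ(b-c)$ by applying \eqref{def:dis} to $b+(-c)$ and inserting the inverse formula: the spurious terms $-a$ and $+a$ so generated sit adjacent and cancel, leaving $a\circ(b-c)=a\circ b-a\circ c+a$. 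Applying \eqref{def:dis} once more to $(b-c)+d$ and substituting this expression, the remaining $+a$ and $-a$ again telescope, yielding exactly $a\circ(b-c+d)=a\circ b-a\circ c+a\circ d$.

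The only delicate point, and the one I would watch most carefully, is the bookkeeping in the non-abelian additive group: summands may not be freely reordered, and the computation closes precisely because every extraneous $\pm a$ produced by \eqref{def:dis} appears immediately next to its cancelling partner. Beyond this I expect no genuine obstacle; in particular the argument uses neither multiplicative inverses nor the right-distributive (skew brace) law, only \eqref{def:dis} and the coincidence of the two neutral elements.
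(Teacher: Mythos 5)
Your proof is correct and follows essentially the same route as the paper's: both derive the key formula $a\circ(-x)=a-a\circ x+a$ from \eqref{def:dis} applied to $x-x=1$ (your substitution $c=-b$ is the same computation), then obtain the four-term identity by grouping $b-c+d$ as $(b-c)+d$ and cancelling the adjacent $\pm a$ terms, and both handle the converse by inserting the neutral element as the middle term. The care you take with non-commutativity of $+$ (inverse reversing the order of summands) is exactly the point the paper's computation also relies on.
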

\begin{proof} This follows from \cite{Brz:tru}. Indeed,  let us assume that \eqref{def:dis} holds,  and recall that in any skew brace $0=1, $ then we observe
\begin{eqnarray}
a\circ 1 =a \Rightarrow a \circ (x - x) = a \Rightarrow a\circ(-x) = a - a\circ x +a,\nonumber
\end{eqnarray}
and consequently $\forall a,b,c,d\in B$, we have
$$
\begin{aligned}
&a\circ (b-c+d)=a\circ (b-c)-a+a\circ d=a\circ b -a +a\circ (-c)- a+ a\circ d\\ &=
a\circ b -a+a -a\circ c+a- a+ a\circ d=a\circ b-a\circ c+a\circ d.
\end{aligned}
$$
Conversely,  we observe that 
$$
a\circ (b+c)=a\circ (b-1+c)=a\circ b-a+a\circ c,
$$
so \eqref{def:dis} holds.
\end{proof}
\begin{remark}
Similarly, one can rephrase the right distributivity in the brace, i.e. for all $ a,b,c,d\in B$,
$$
(b+c)\circ a=b\circ a-a+c\circ a \iff (b-c+d)\circ a=b\circ a-c\circ a+d\circ a.
$$
\end{remark}

We introduce below the notion of uniformly distributive of $u$-distributive elements.
\begin{definition}
    Let $B$ be a left skew brace. We say that $z\in B$ is {\em $u$-distributive}, if for all $a,b,c\in B,$
    \begin{equation}(a-b+c)\circ z=a\circ z-b\circ z+c\circ z.\label{regular}\end{equation}
\end{definition}

\begin{remark}
    Notice that if $B$ is a skew brace, then every element of $B$ is $u$-distributive.
\end{remark}

\begin{proposition}\label{lem:long}
Let $B$ be a left skew brace and $z\in B$ be $u$-distributive. For every $a\in B,$ let $\sigma^z_a:B\to B$ and $\tau^{z}_a:B\to B$ be the maps defined as
\begin{equation}\sigma^z_a(b):=a\circ b-a\circ z+z\quad and\quad \tau^{z}_b(a):=
\sz{a}{b}^{-1}\circ a\circ b,\  for\  all\ b\in B,\label{sigmatau}
\end{equation}
 where $\sz{a}{b}^{-1}$ is the inverse of $\sz{a}{b}$ in $(B,\circ)$. Then the following conditions are satisfied for all $ a,b,c,d\in B:$
\begin{enumerate}
\item The set $\{z\in B\ |\ \forall{a,b,c\in B}\ (a-b+c)\circ z=a\circ z-b\circ z+c\circ z\}$ is a subgroup of $(B,\circ)$, \label{lem:long:eq:4}
\item $\sz{a}{b-c+d}=\sz{a}{b}-\sz{a}{c}+\sz{a}{d}$, \label{lem:long:eq:1}
\item $\sz{a}{\sz{b}{c}}=\sz{a\circ b}{c} $, \label{lem:long:eq:2}
\item $a\circ \sz{b}{c}=\sz{a\circ b}{c}-z+a\circ z$,\label{lem:long:eq:3}
\item $\sz{a}{b}\circ\tz{b}{a}= a\circ b$\label{lem:long:eq:5}
\item $\sigma^z_{a}(b) \circ \sigma^z_{\tau^z_{b}(a)}(c)= \sigma^z_{a}(\sz{b}{c}) \circ 
\sigma^z_{\tz{\sz{b}{c}}{a}}(\tz{c}{b}).$\label{lem:long:eq:6}
\item The maps $\sigma^z_a$ and $\tau^z_{a}$ are bijective for all $a\in B.$\label{lem:long:eq:7}
\end{enumerate}
\end{proposition}

\begin{proof}
Let $a,b,c,d \in B$,  and we recall  Lemma 2.1:

\begin{eqnarray}
\sz{a}{b-c+d}&=&a\circ (b-c+d)-a\circ z+z=a\circ b-a\circ c+a\circ d-a\circ z+z\nonumber \\ 
&=&a\circ b-a \circ z+a \circ z-a \circ c+a \circ d-a \circ z+z \nonumber \\ 
&=&a \circ b-a \circ z+z-z+a\circ z-a \circ c+a \circ d-a \circ z+z\nonumber \\ 
&=&\circ b-a\circ z+z-(a\circ c-a\circ z+z)+a\circ d-a\circ z+z\nonumber \\ 
&=&a\circ b-a\circ z+z-(a\circ c-a\circ z+z)+a\circ d-a\circ z+z\nonumber \\ 
&=&\sz{a}{b}-\sz{a}{c}+\sz{a}{d},\nonumber 
\end{eqnarray}
\begin{eqnarray}
\sz{a}{\sz{b}{c}}&=&\sz{a}{b\circ c-b\circ z+z}=a\circ (b\circ c-b\circ z+z)-a\circ z+z\nonumber\\ 
&=&a\circ b\circ c-a\circ b\circ z+a\circ z-a\circ z+z \nonumber \\ 
&=&a\circ b\circ c-a\circ b\circ z+z=\sz{a\circ b}{c}, \nonumber 
\end{eqnarray}
\begin{eqnarray}
a\circ \sz{b}{c}&=&a\circ (b\circ c-b\circ z+z)=a\circ b\circ c-a\circ b\circ z+a\circ z \nonumber  \\ 
&=&a\circ b\circ c-a\circ b\circ z+z-z+a\circ z=\sz{a\circ b}{c}-z+a\circ z,\nonumber
\end{eqnarray}
\begin{eqnarray}
(a-b+c)\circ z^{-1}&=&(a\circ z^{-1}\circ z-b\circ z^{-1}\circ z+c\circ z^{-1}\circ z)\circ z^{-1} \nonumber \\ 
&=&(a\circ z^{-1}-b\circ z^{-1}+c\circ z^{-1})\circ z\circ z^{-1}\nonumber \\ 
&=&(a\circ z^{-1}-b\circ z^{-1}+ c\circ z^{-1}),\nonumber
\end{eqnarray}
\begin{eqnarray}
\sz{a}{b}\circ\tz{b}{a}&=&\sz{a}{b}\circ \sz{a}{b}^{-1}\circ a\circ b= a\circ b,\nonumber 
\end{eqnarray}
thus,  properties (1)-(5) hold.

To show (6)  we observe that using Lemma \ref{lem:ter} and the 
fact that multiplying by $z$ distributes from the right side,  we get for all $a,b,c\in B$
$$
\begin{aligned}
\sigma^z_{a}(b) \circ \sigma^z_{\tau^z_{b}(a)}(c)&=\sz{a}{b}\circ(\tz{b}{a}\circ c-
 \tz{b}{a}\circ z+z)\\ &=\sz{a}{b}\circ \tz{a}{b}\circ c- \sz{a}{b}\circ \tz{a}{b}\circ z+\sz{a}{b}\circ z\\ &=
a\circ b\circ c-a\circ b\circ z+ \sz{a}{b}\circ z\\ &=a\circ b\circ c-a\circ b\circ z+ 
(a\circ b-a\circ z+z)\circ z\\ &=a \circ b \circ c- a \circ z \circ z+z\circ z,
\end{aligned}
$$

By substituting $b$ with $\sigma^z_b(c)$ and $c$ with $\tau^z_c(b)$, by using Proposition \ref{lem:long}\eqref{lem:long:eq:5}, we immediately get 

$\sigma^z_{a}(\sz{b}{c}) \circ \sigma^z_{\tz{\sz{b}{c}}{a}}(\tz{c}{b})=a \circ\sigma^z_b(c) \circ \tau^z_c(b)- a \circ z \circ z+z\circ z=a \circ b \circ c- a \circ z \circ z+z\circ z.$

Thus, $\sigma^z_{a}(b) \circ \sigma^z_{\tau^z_{b}(a)}(c)=\sigma^z_{a}(\sz{b}{c}) 
\circ \sigma^z_{\tz{\sz{b}{c}}{a}}(\tz{c}{b})$, and (6) holds.

For (7), observe that both maps are injective as
\begin{eqnarray}
&&\sigma_x^z(y_1) = \sigma_x^z(y_2)  \Leftrightarrow x\circ y_1 - x\circ z +z = 
x \circ y_2- x \circ z +z \nonumber \Leftrightarrow y_1 =y_2,\nonumber\\
&&\tau^z_{y}(x_1) = \tau^z_{y}(x_2) \Leftrightarrow t_1\circ x_1 \circ z  = t_2\circ x_2 \circ z \Leftrightarrow  
t_1\circ x_1  = t_2\circ x_2, \label{cc1} \nonumber
\end{eqnarray}
where recall $t_i = \sigma^z_{x_i}(y)^{-1},$ $i \in \{1,2\}$. Thus, 
$t_1\circ x_1=t_2\circ x_2\iff x_1^{-1}\circ t_1^{-1}=x_2^{-1}\circ t_2^{-1}$, 
which leads to
$y-z+x_1^{-1}\circ z=y-z+x_2^{-1}\circ z,$ and hence $x_1=x_2$, so both 
maps are injective.  

To show that the maps are surjective, we observe that for all $ c\in B$, fixed $y,z\in B$, and $h=(y^{-1}-z^{-1}+c^{-1}\circ z^{-1})$,
$$
\sz{x}{x^{-1}\circ(c-z+x\circ z)}=c-z+x\circ z-x\circ z+z=c
$$
$$
\begin{aligned}
\tz{y}{h^{-1}\circ y^{-1}}&=(h^{-1}\circ y^{-1}\circ y-h^{-1}\circ y^{-1}\circ z+z)^{-1}\circ h^{-1}\circ y^{-1}\circ y\\ &=
\left(h^{-1}\circ(-y^{-1}\circ z+h\circ z)\right)^{-1}\circ h^{-1}\\
&=(-y^{-1}\circ z+h\circ z)^{-1}\circ h\circ h^{-1}=(-y^{-1}\circ z+h\circ z)^{-1}\\
&=\left (-y^{-1}\circ z+(y^{-1}-z^{-1}+c^{-1}\circ z^{-1})\circ z \right)^{-1}\\
&=\left (-y^{-1}\circ z+y^{-1}\circ z-1+c^{-1} \right)^{-1}=(c^{-1})^{-1}=c.
\end{aligned}
$$
Thus both $\tau^z_y$ and $\sigma^z_x$ are bijections.
\end{proof}

\noindent We should note that even though the definition of $\sigma^z_x$ might seem like a lucky guess, 
there is some intuition behind it. The map is closely related to trusses, see \cite{Brz:tru}. 
The map $\sigma^z$ appears in \cite{Brz:par}, denoted by $\lambda^z$, as a map that 
defines a paragon, that is a congruence class, see \cite{BrzRyb:con}. 
Thus, these solutions connect, in some particular way not yet clear to us, the structure of 
quotient of a brace with  the Yang Baxter-equation.

We may now proceed in proving the following main theorem.
\begin{theorem}\label{prop:main}
Let $B$ be a left skew brace, let $z\in B$ be $u$-distributive, and let $\sigma^z_a:B\to B,$ and $\tau^z_a:B\to B$ be the maps \eqref{sigmatau} defined in Proposition \ref{lem:long}. Suppose the map $\check{r}_z:B\times B\to B\times B$ is defined by
$$
\check{r}_z(a,b)=(\sz{a}{b},\tz{b}{a}),\quad a,b\in B.
$$
Then $\check{r}_z$ is a non-degenerate solution of the braid equation.

\end{theorem}
\begin{proof}To prove this we need to show that the maps $\sigma, \tau$ satisfy the constraints (\ref{C1})-(\ref{C3}).  
To achieve this we use Lemma \ref{lem:ter} and properties (2),  (5) and (6) from Proposition \ref{lem:long}. 

Indeed, from Proposition \ref{lem:long}, (2) and (5), it follows that (\ref{C1}) is satisfied, i.e. 
\[\sz{\eta}{\sz{x}{y}}=\sz{\sz{\eta}{x}}{\sz{\tz{x}{\eta}}{y}}.\]

We observe that
\begin{equation}
\tau^z_{b}(\tau^z_a(\eta)) = T\circ \tau^z_{a}(\eta)\circ b = T \circ t \circ \eta \circ a \circ b = 
T \circ t \circ \eta \circ \sz{a}{b} \circ \tz{b}{a}, \nonumber
\end{equation}
where $T= \sz{\tz{a}{\eta}}{b}^{-1}$ and $t = \sigma^z_{\eta}(a)^{-1} $ (the inverse in the  circle group). 
Due to (6),  (2),  (5)  of Proposition \ref{lem:long},  we then conclude that 
\[\tau^z_{b}(\tau^z_a(\eta)) =\tau^z_{\tz{b}{a}}(\tau^z_{\sz{a}{b}}(\eta)),\] 
so (\ref{C2}) is also satisfied. 

To prove (\ref{C3}),  we first employ (6) of  Proposition \ref{lem:long},  and then use the definition of $\tau$, 
$$
\sigma^z_{\tau^z_{\sigma^z_x(y)}(\eta)}(\tau^z_{y}(x))=\sz{\eta\circ x}{y}^{-1} \circ\sigma^z_{\eta}(x) \circ 
\sigma^z_{\tau^z_{x}(\eta)}(y)=\tz{\sz{\tz{x}{\eta}}{y}}{\sz{\eta}{x}}.
$$
Thus, (\ref{C3}) is satisfied, and $\check{r}_z(a,b)=(\sz{a}{b},\tz{b}{a})$ is a solution of braid equation.

The non-degeneracy follows by the Proposition \ref{lem:long} \eqref{lem:long:eq:7}.
\end{proof}

\begin{corollary}
Let $B$ be a skew brace. Then for all $ z \in B$, $\check r_z$ defined as in Theorem \ref{prop:main} 
is a set-theoretic solution of the braid equation.
\end{corollary}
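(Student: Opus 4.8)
The plan is to show that the corollary is an immediate consequence of Theorem~\ref{prop:main} once one observes that in a skew brace \emph{every} element $z$ satisfies the distributivity hypothesis imposed there; the only remaining step is to pass from the braid equation to the Yang--Baxter equation.

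First I would isolate exactly what Theorem~\ref{prop:main} requires of $z$: namely that $(a-b+c)\circ z = a\circ z - b\circ z + c\circ z$ for all $a,b,c\in B$. The point is that this is precisely right distributivity evaluated at a fixed right argument $z$. By definition, a skew brace satisfies $(b+c)\circ a = b\circ a - a + c\circ a$ for \emph{all} $a,b,c\in B$, and by the Remark following Lemma~\ref{lem:ter} this is equivalent to $(b-c+d)\circ a = b\circ a - c\circ a + d\circ a$ for all $a,b,c,d\in B$. Setting $a=z$ and relabelling the remaining variables yields exactly the hypothesis of Theorem~\ref{prop:main}. Crucially, because the right-distributivity axiom of a skew brace holds uniformly in its right argument, this hypothesis is met for \emph{every} $z\in B$, with no further restriction.

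Having verified the hypothesis, I would simply invoke Theorem~\ref{prop:main}: for each $z\in B$ the pair $(B,\check r_z)$ is a solution of the braid equation \eqref{braid}. To conclude I would use the correspondence recalled in Section~1, setting $r_z = \check r_z\,\pi$ with $\pi$ the flip map; the braid equation \eqref{braid} for $\check r_z$ is equivalent to the Yang--Baxter equation \eqref{YBE} for $r_z$. Hence $\check r_z$ is a set-theoretic solution of the YBE for every $z\in B$, with non-degeneracy inherited exactly as in the setting of Theorem~\ref{prop:main}.

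The only subtle point---and the place where the full skew-brace axiom (as opposed to merely a left skew brace) is indispensable---is the recognition that the seemingly ad hoc condition on $z$ in Theorem~\ref{prop:main} is automatically and universally satisfied in a skew brace. In a general left skew brace this condition can single out a proper subset of admissible $z$, so no genuine computation is needed here beyond this structural observation.
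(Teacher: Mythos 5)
Your proposal is correct and follows exactly the paper's intended argument: the paper states this corollary without proof as an immediate consequence of Theorem~\ref{prop:main}, the point being precisely your observation that the right-distributivity axiom of a skew brace (in the equivalent form $(b-c+d)\circ a = b\circ a - c\circ a + d\circ a$ from the Remark after Lemma~\ref{lem:ter}) makes the hypothesis on $z$ hold for every $z\in B$. Your additional care in passing from the braid equation to the YBE via $r_z=\check r_z\,\pi$ matches the conventions set up in Section~1 and fills in the only remaining (terminological) step.
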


\begin{remark} \label{rem2} Notice that for $z=1,$  Guarnieri-Vedramin skew braces are recovered; if in addition $(B, +)$ is an abelian group Rump's braces are recovered and  $\check r_{z=1}$ 
becomes involutive, due to: $\sigma^z_{\sigma^z_x(y)}(\tau^z_y(x)) = x$ and $\tau^z_{\tau^z_y(x)} (\sigma^z_x(y)) =y.$ 
In the general case $z \neq 1$ the solutions are not involutive anymore given that, although 
still $\sigma^z_x(y) \circ \tau_y^z(x) = x\circ y$ holds, 
$\sigma^z_{\sigma^z_x(y)}(\tau^z_y(x))\neq x,$ $\tau^z_{\tau^z_y(x)} (\sigma^z_x(y)) \neq y.$ 

The following Lemmata explain when our solutions are involutive.
\begin{lemma}\label{lem:trojkat}
Let $B$ be a left skew brace, let $z\in B$ be $u$-distributive. Then 
$$\sz{a}{b}=\sigma^{1}_{a}(b),\ for\  all\  a,b\in B$$  if and only if  $a\circ z=z+a$ for all $ a\in B$. In this case the solution $(B,\check{r}_z)$ coincides with $(B,\check{r}),$ the canonical solution on $B.$
\end{lemma}
\begin{proof}
Observe that for all $ a,b\in B$ such that $a\circ z=z+a$,
$$
\sz{a}{b}=a\circ b-a\circ z+z=a\circ b-(z+a)+z=a\circ b-a-z+z=a\circ b-a=\sn{a}{b}.
$$
Conversely, if $\sz{a}{b}=\sn{a}{b}$, then
$$
a\circ z=z-\sn{a}{b}+a\circ b=z-(a\circ b-a)+a\circ b=z+a-a\circ b+a\circ b=z+a.
\hfill \qedhere
$$
\end{proof}

\begin{lemma}\label{lem:inv}
Let $B$ be a left skew brace. The solution $\check r_{z}$ is involutive if and only if $B$ is 
a left brace and for all $ a,b\in B,$ $\sz{a}{b}=\sigma^{1}_{a}(b)$.
\end{lemma}
\begin{proof}
Let $B$ be a left brace and $\sz{a}{b}=\sn{a}{b}$ for all $a,b\in B$. Then
$$
\begin{aligned}
\sz{\sz{a}{b}}{\tz{b}{a}}&=\sn{\sn{a}{b}}{\tz{b}{a}}=a\circ b-\sn{a}{b}=a\circ b-(a\circ b-a)=a.
\end{aligned}
$$
Thus, we get that $\check r^2_{z}=(a,a^{-1}\circ a\circ b)=(a,b)$, i.e. $r_{z}$ is involutive.

Conversely, let us assume that $\check r_{z}$ is involutive, that is, for all $ a,b\in B$,
$$\sz{\sz{a}{b}}{\tz{b}{a}}=a.$$
If $a=1$, then $\sz{b}{1}=1$. Hence, $b-b\circ z+z=1$, and
$b\circ z=z+b$, so by Lemma \ref{lem:trojkat}, $\sz{a}{b}=\sn{a}{b}$. Then,
$$\sn{\sn{a}{b}}{\tn{b}{a}}=a\circ b-(a\circ b-a)=a,$$
and $a\circ b+a=a+a\circ b$, for all $ a,b\in B$. Now, let $b=a^{-1}\circ c$ for any $c\in B$, 
then $a+c=c+a$, $(B,+)$ 
is an abelian group, and thus $B$ is a left brace.
\end{proof}
\begin{remark}
In the case of a brace, Lemma \ref{lem:inv} and Lemma \ref{lem:trojkat}, say that $\check r_z$ is 
involutive if and only if $z\in \mathrm{Soc}(B):=\{b\in B\ |\ \forall{a\in B}\  a\circ b=a+b\}$. Moreover,  
for all $ z,z'\in \mathrm{Soc}(B)$ $\check r_z=\check r_{z'}$, that is there is only 
one involutive solution associated that way with a brace.
\end{remark}

Note that although $\check r_z$ is not involutive, $\check r_z \check r_z^T = \check r^T_z \check r_z =\mbox{id},$ where $^T$ denotes total transposition, i.e.  $\check r_z^T: (\sigma^z_x(y), \tau^z_{y}(x)) \mapsto (x, y)$; similarly $r_z r_z^T =r^T_z r_z=\mbox{id}.$ Recall  that $\check r_z$ satisfies the braid equation and $r_z$ satisfies the YBE.
\end{remark}

In the following Proposition we provide the explicit expressions of the inverse $\check r_z$-matrices as well as the corresponding  bijective maps.
\begin{proposition} \label{rem3} Let $\check r_z,\ \check r_{\hat z}^*: X \times X$ be solutions of the braid equations, such that
$\check r^*: (x,y )\mapsto (\hat \sigma^{z}_x(y),  \hat \tau^{z}_y(x)),$ 
$\check r: (x,y) \mapsto (\sigma^{z}_x(y),   \tau^{z}_y(x)).$

\begin{enumerate} 
\item $\check r^* = \check r^{-1}$ if and only if 
\begin{eqnarray}
\hat \sigma^{ z}_{\sigma_x^z(y)}(\tau_y^z(x)) =x, \  \hat \tau^{ z}_{\tau^z_{y}(x)}(\sigma^z_{x}(y)) = y \ \mbox{and} \ \sigma^{z}_{\hat \sigma_x^{z}(y)}(\hat \tau_y^{z}(x)) =x,  \ \tau^{z}_{\hat \tau^{ z}_{y}(x)}(\hat \sigma^{ z}_{x}(y)) = y.   \label{k2}
\end{eqnarray}

\item  If $\sigma^z_x(y) = x\circ y - x\circ z+z,$ $\ \tau^z_y(x) = \sigma^z_x(y)^{-1}\circ x \circ y,$ then $\hat\sigma^{z}_x(y)= -x\circ z^{-1} + x\circ y \circ z^{-1}, $ $\ \hat \tau^z_x(y) =\hat\sigma^{z}_x(y)^{-1}\circ x \circ y.$
\end{enumerate}

\end{proposition}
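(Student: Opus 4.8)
The plan is to handle the two parts separately: part (1) is purely formal, whereas part (2) is a direct verification using the explicit maps and the algebraic identities already established. For part (1) I would simply unwind the meaning of a two-sided inverse. As maps on $X\times X$, the equality $\check r^*=\check r^{-1}$ is equivalent to the two relations $\check r^*\circ\check r=\mbox{id}$ and $\check r\circ\check r^*=\mbox{id}$. Writing $\check r(x,y)=(\sz{x}{y},\tz{y}{x})$ and applying $\check r^*$ componentwise gives
$$\check r^*\big(\check r(x,y)\big)=\Big(\hat\sigma^{z}_{\sz{x}{y}}(\tz{y}{x}),\ \hat\tau^{z}_{\tz{y}{x}}(\sz{x}{y})\Big),$$
so that $\check r^*\circ\check r=\mbox{id}$ is exactly the first two equations of \eqref{k2}; expanding $\check r\circ\check r^*$ symmetrically yields the remaining two. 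Hence part (1) is nothing but a componentwise reading of the inverse condition, and it requires no brace structure at all.

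For part (2), by part (1) it suffices to check the four identities in \eqref{k2} for the explicit $\hat\sigma,\hat\tau$. The two facts I would lean on throughout are the product relation $\sz{x}{y}\circ\tz{y}{x}=x\circ y$ from Proposition \ref{lem:long}(5), together with its hatted analogue $\hat\sigma^z_x(y)\circ\hat\tau^z_y(x)=x\circ y$ which is immediate from $\hat\tau^z_y(x)=\hat\sigma^z_x(y)^{-1}\circ x\circ y$, and right distributivity by $z$ and $z^{-1}$, supplied by the hypothesis on $z$ and by Proposition \ref{lem:long}(4). For the first identity I would set $u=\sz{x}{y}$ and $v=\tz{y}{x}$; then $u\circ v=x\circ y$, so $\hat\sigma^z_{u}(v)=-u\circ z^{-1}+x\circ y\circ z^{-1}$, and expanding $u\circ z^{-1}=(x\circ y-x\circ z+z)\circ z^{-1}=x\circ y\circ z^{-1}-x+1$ via Proposition \ref{lem:long}(4) collapses the right-hand side to $x$. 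The remaining three identities follow by the same mechanism: reduce a composite to $x\circ y$ using the two product relations, then cancel through the $z$- or $z^{-1}$-distributivity.

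The main obstacle I anticipate is bookkeeping rather than conceptual. Since $B$ is only assumed to be a skew brace, the additive group $(B,+)$ need not be abelian, so I must keep the summands in the correct order and apply $-(a-b+c)=-c+b-a$ every time a minus sign is distributed. A second point requiring care is the correct invocation of right distributivity: the hypothesis on $z$ is stated for three-term expressions $(a-b+c)\circ z$, so to treat two-term expressions such as $(-a+b)\circ z$ I would first insert the neutral element and write $-a+b=1-a+b$ (recall $0=1$), after which the hypothesis and Proposition \ref{lem:long}(4) apply verbatim. Once these conventions are fixed, each of the four verifications reduces to a short chain of cancellations, and the claimed formulas for $\hat\sigma^z_x(y)$ and $\hat\tau^z_y(x)$ drop out.
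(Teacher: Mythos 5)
Your proposal is correct and follows essentially the same route as the paper: part (1) is the componentwise unwinding of the two-sided inverse condition, and part (2) verifies the identities of \eqref{k2} directly using $\sz{x}{y}\circ\tz{y}{x}=x\circ y$, its hatted analogue, and right distributivity by $z$ and $z^{-1}$ (Proposition \ref{lem:long}(4)--(5)). Your explicit remark about inserting the neutral element to reduce two-term expressions to the three-term distributivity hypothesis, and about the non-abelian sign rule $-(a-b+c)=-c+b-a$, is a careful treatment of steps the paper performs implicitly.
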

\begin{proof} The proof is straightforward.  We prove the two parts of Proposition \ref{rem3} below:
\begin{enumerate}
\item If $\check r^* = \check r ^{-1},$ then $\check r \check r^* = \check r^* \check r= \mbox{id}$ and $\check r \check r^*(x, y) = ( \sigma^z_{\hat \sigma_x(y)}(\hat \tau^z_{y}(x)),   \tau^{ z}_{\hat \tau^z_{y}(x)}(\hat \sigma^z_{x}(y)) ).$ Thus  $\sigma^{z}_{\hat \sigma_x^{z}(y)}(\hat \tau_y^{z}(x)) =x,$ $\tau^{z}_{\hat \tau^{ z}_{y}(x)}(\hat \sigma^{ z}_{x}(y)) = y.$ And vice versa if $\sigma^{z}_{\hat \sigma_x^{z}(y)}(\hat \tau_y^{z}(x)) =x, $\ $ \tau^{z}_{\hat \tau^{ z}_{y}(x)}(\hat \sigma^{ z}_{x}(y)) = y,$ then it automatically follows that $\check r^{*} = \check r^{-1}.$ Similarly,  $\check r^* \check r(x,y)= (x, y)$ leads to 
$\hat \sigma^{ z}_{\sigma_x^z(y)}(\tau_y^z(x)) =x, $ $\hat \tau^{ z}_{\tau^z_{y}(x)}(\sigma^z_{x}(y)) = y,$ and vice versa.

\item For the second part of the Proposition it suffices to show (\ref{k2}). Indeed,
\begin{eqnarray}
 \hat \sigma^z_{\sigma^z_x(y)}(\tau^z_y(x)) &=& -\sigma^z_x(y)\circ z^{-1}+ \sigma^z_x(y)\circ \tau_y^z(x)\circ z^{-1} \nonumber\\
&=&  
-\sigma^z_x(y)\circ z^{-1} + x \circ y \circ z^{-1} \nonumber\\
&=&  - (x\circ y - x\circ z +z)\circ z^{-1} +  x \circ y \circ z^{-1} \nonumber\\
&=&  -z\circ z^{-1} +x \circ z \circ z^{-1} -  x \circ y \circ z^{-1}+ x \circ y \circ z^{-1} = x. \nonumber
\end{eqnarray}
Also, $\hat \tau^{ z}_{\tau^z_{y}(x)}(\sigma^z_{x}(y))  = x^{-1} \circ \sigma^z_x(y) \circ \tau_y^z(x)= y. $
Similarly, we show
\begin{eqnarray}
\sigma^{z}_{\hat \sigma_x^{z}(y)}(\hat \tau_y^{z}(x))  &=& 
 \hat \sigma^z_x(y)\circ \hat \tau_y^z(x) -  \hat \sigma^z_x(y)\circ z +z \nonumber\\
&=& x \circ y - (-x \circ z^{-1} +x\circ y\circ z^{-1})\circ z
 +z\nonumber\\ &=&  x \circ y -x \circ y +x -z +z = x.\nonumber
\end{eqnarray}
And as above we immediately conclude that $\tau^{z}_{\hat \tau^{ z}_{y}(x)}(\hat \sigma^{ z}_{x}(y)) = y.$
\hfill \qedhere
\end{enumerate}
\end{proof}
\begin{remark}
Let $\check r(x,y)= (\sigma_x(y), \tau_y(x))$ and $\check r_{GV}(x,y)= (\hat \sigma_x(y), \hat \tau_y(x))$, such that:
\begin{equation}
\sigma_x(y) = x\circ y - x, \  \tau_y(x) = \sigma_x(y)^{-1}\circ x \circ y \quad \mbox{and} 
\quad \hat \sigma_x(y) = -x +x\circ y, \ \hat \tau_y(x) = \hat \sigma_x(y)^{-1}\circ x \circ y \nonumber
\end{equation} 
i.e, this is the special case $z=1$ ($\check r_{GV}$ is the solution of \cite{GV}).
Then via Proposition \ref{rem3} we immediately obtain that $ \check r_{GV} =  \check r^{-1}.$ Note that the solution $\check{r}$ is associated to {\it opposite skew braces} (see Theorem 4.1. in \cite{KochTruman}).
\end{remark}

\begin{example}[See \cite{BrzRybMer} Example 5.6 or \cite{BrzRyb:con} Example 3.15]\label{ex:fractions} 
Let us consider a set $\mathrm{Odd}:=\big\{\frac{2n+1}{2k+1}\ |\ n,k\in\mathbb{Z}\big\}$ together with two binary operations 
$(a,b)\overset{+_1}{\longmapsto}a-1+b$ and $(a,b)\overset{\circ}{\longmapsto}a\cdot b$, where $+,\cdot$ 
are addition and multiplication 
of rational numbers, respectively. The triple $(\mathrm{Odd},+_1,\circ)$ is a brace. By Lemma \ref{lem:inv}, 
the solution $\check r_z$ is involutive if 
and only if $a\cdot b-a\cdot z+z=a\cdot b-a+1$ if and only if $(z-1)\cdot (1-a)=0$, for all $ a,b\in B$. 
Therefore, for all $ z\not=1$, $\check r_{z}$ 
is non-involutive.  Moreover, $\check r_{z}=\check r_{w}$ if and only if $-a\cdot z+z=-a\cdot w+w$, that is if $z=w$.
\end{example}

\begin{example}[See \cite{BrzRyb:con} Corollary 3.14]\label{ex:cyclicbraces}
Let $\mathrm{U}(\mathbb{Z}/2^n\mathbb{Z})$ denote a set of invertible integers modulo $2^n$, for some $n\in \mathbb{N}$. 
Then a triple $(\mathrm{U}(\mathbb{Z}/2^n\mathbb{Z}),+_1,\circ )$ is a brace, where $a+_1b=a-1+b$ 
for all $ a,b\in \mathrm{U}(\mathbb{Z}/2^n\mathbb{Z})$, $+$ and $\circ $ are addition and multiplication of integer numbers modulo 
$2^n$, respectively.  Observe that $\check r_z=\check r_w$ if and only if $(a-1)\circ(w-z)=0 \pmod{2^n},$ for all $ a\in \mathrm{U}(\mathbb{Z}/2^n\mathbb{Z})$. 
\end{example}

\begin{example}[See \cite{BrzRybMer} Example 5.7]
Let us consider a ring $\mathbb{Z}/8\mathbb{Z}$. A triple $$\left(\mathrm{OM}:=\left \{\begin{pmatrix}
a & b\\
c & d\end{pmatrix}\ |\ a,d\in \{1,3,5,7\},\  b,c\in\{0,2,4,6\}
\right \},+_\mathbb{I},\circ\right)$$ is a brace, where $(A,B)\overset{+_{\mathbb{I}}}{\longmapsto} A-\mathbb{I}+B$, $(A,B)\overset{\circ}{\longmapsto} A\cdot B$, and $+,\cdot$ are addition and multiplication of two by two matrices over $\mathbb{Z}/8\mathbb{Z}$, respectively. Moreover one can easily check that two solutions $\check r_A$ and $\check r_B$ are equal if and only if 
$(D-\mathbb{I})\cdot(B-A)=0\pmod 8,$ for all $ D\in \mathrm{OM}$.
\end{example}

\begin{example}
Let $B$ be a two-sided brace and $S$ be a left skew brace. Then the product $B\times S$ is a left skew brace with operations 
given on coordinates. In this case,  for all elements of the form $(b,1)\in B\times S$, we can associate a solution, as in Theorem
\ref{prop:main} by defining $\check r_{(b,1)}((a,c),(d,e))=
((\sigma^b_a(d),\sigma^1_c(e)),(\tau^b_d(a),\tau^{1}_{e}(c)))$, for all $ (a,c),(d,e)\in B\times S$.
\end{example}

An interesting observation is that in contrast to the solutions given by skew braces and $z=1$,  in our parameter dependent solutions, $\tau^z_b$ or $\sigma^z_a$ are not necessarily right group actions, which follows from Lemma \ref{lem:action} below. Non-involutive parametric  solutions are obtained even in the case that the underlying algebraic structure is a brace. Furthermore, there does not necessarily have to exist a homomorphism of Yang-Baxter solutions (see Definition \ref{def11}) between $\check{r}_z$ and $\check{r}_{GV},$ where $\check{r}_{GV}$ is the canonical solution given by a left skew brace, as shown in Proposition \ref{pp2}. All this make our solutions of particular interest, and they certainly merit further investigation.

\begin{lemma}\label{lem:action}
Let $B$ be a left skew brace and $\check r_z(x,y)= (\sigma^z_{x}(y),\tau^z_{y}(x))$ be a solution defined in the Theorem \ref{prop:main}. Then $\tau^z:(B,\circ)\to \mathrm{Aut}(B)$, $b\mapsto \tau^z_b$ is a group action if and only if $a\circ z=z+a$. Similarly,  let $\check r^* = (\hat \sigma^z_{x}(y),\hat \tau^z_{y}(x))$ be the inverse solution of Proposition \ref{rem3},  then  $\hat \sigma^z:(B,\circ)\to \mathrm{Aut}(B)$, $a \mapsto \hat \sigma^z_a$  is a group action if and only if $a\circ z^{-1}=z^{-1}+a$. 
\end{lemma}
\begin{proof}
If $x\circ z=z+x$ for all $x\in B$, then $\tau^z=\tau^1$ and one can easily check that $\tau^1$ is an action. In the opposite direction,  let $\tau^z_a\tau^z_b=\tau^z_{b\circ a}. $ We first compute
$$
\begin{aligned}
&\tau^z_c(\tau^z_b(a))= \sigma^z_{\tau^z_b(a)}(c)^{-1}\circ \tau^z_{b}(a) \circ c =  \sigma^z_{\tau^z_b(a)}(c)^{-1}\circ \sigma^z_a(b)^{-1} \circ a \circ b \circ c,
\end{aligned}
$$
but $\sigma^z_{a}(b)\circ \sigma^z_{\tau^z_b(a)}(c) = \sigma_a^{z\circ z}(b\circ c)$ 
and hence $\tau^z_c(\tau^z_b(a)) = \tau_{b\circ c}^{z\circ z}(a).$ But due to our assumption that $\tau^z_b$ is a group action the following should hold $\forall a, b \in B$
\begin{equation} 
\tau^{z\circ z}_{b}(a) = \tau^z_{b}(a) \  \Rightarrow \ \sigma_{a}^{z\circ z}(b) = \sigma_a^{z}(b) \nonumber
\end{equation}
which leads to  $-a\circ z +z = -a \circ z \circ z + z \circ z$. For $a=z^{-1}$ we get
$z\circ z = z+z \Leftrightarrow z= -z^{-1}$. Then
\begin{eqnarray}
&& -a\circ z +z = -a \circ z \circ z + z \circ z\ \Rightarrow \nonumber\\
&& -a\circ z  +z = -a\circ (z+z) + z +z\  \Rightarrow\  \nonumber\\ && 
a -a\circ z +z =0\ \Rightarrow \  a\circ z = z+a. \nonumber
\end{eqnarray}
Similar proof holds for the second part of the Lemma.
\end{proof}

\begin{proposition} \label{pp2}
Let $(B, \circ, +)$ and $(S, \bullet,  +_s)$ be left skew braces,  and let $z \in B$ be $u$-distributive. Let also
$\check r : S\times S \to S\times S$ be the solution $\check r_{GV}(a,b) = ({-_s}  a  +_s a \bullet b,  (-_s a  +_s a\bullet  b)^{-1.} \bullet a \bullet b)$ ($-_s  a$ is the inverse with respect to $+_s,$ $a^{-1.}$ the inverse with respect to $\bullet$) and  $\check r_z : B \times B \to B \times B$ be the solution  $\check r_z(a,b) = ( - a \circ z^{-1}+ a\circ b\circ z^{-1},  (- a \circ z + a\circ b\circ z)^{-1} \circ a \circ b).$ If there exists a map $f: S \to B$ such that 
\begin{equation}
(f\times f) \check r_{GV} = \check r_z(f\times f), \label{themap}
\end{equation}
$1, z \in \mathrm{Im}(f)$  and $f(1) +z =z +f(1),$ then $z =- z^{-1}.$
\end{proposition}
\begin{proof}
From (\ref{themap}) it follows
\begin{eqnarray}
f(-_s a +_s  a\bullet b) = -f(a) \circ z^{-1} + f(a)\circ f(b) \circ z^{-1} \label{eq1}
\end{eqnarray}
By setting $(i)\ a=1,$ and $(ii)\ b=1$ (\ref{eq1}) we obtain equalities:
\begin{equation}
(i)~~f(b) = -f(1)\circ z^{-1} + f(1)\circ f(b) \circ z^{-1}  \quad \mbox{and} \quad   (ii)~~f(1) = -f(a)\circ z^{-1} + f(a)\circ f(1) \circ z^{-1} . \nonumber\\
\end{equation}
\begin{enumerate}
\item If $f(a) =z$ in equality (i), then  $f(1) \circ z = z \circ f(1).$\\
\item If $f(b) =z $ in equality (ii), then $f(1) \circ z^{-1} = f(1)-z.$\\
\item If $f(a) =1$ in equality (i), then $f(1) \circ z^{-1} = z^{-1} +f(1).$
\end{enumerate}

From the last two expressions above and the fact that $f(1) +z =z +f(1),$ we conclude that $z=-z^{-1}.$ That is for generic values of $z$ there exists no homorphism between the two distinct solutions.
\end{proof}

\begin{proposition}\label{ppm}
Let $(B,+,\circ )$, $(S,+_s,\bullet)$ be left skew braces, and let  $z \in B$ be $u$-distributive. Then we can consider the following two solutions
$\check r_1 : S\times S \to S\times S$ $ r_z : B \times B \to B \times B$ from the Proposition \ref{rem3}. Let $\eta\in B$ be such that $z+\eta\not=\eta\circ z$ and $f: S \to B$ be a function such that $\eta,1\in \mathrm{Im}(f)$, and $f(1)+\eta=\eta+f(1)$, then $f$ is not a homomorphism of the Yang-Baxter solutions, i.e.
\begin{equation}
(f\times f) \check r_1 \not=r_z(f\times f). 
\end{equation}
\end{proposition}
\begin{proof}
Let us assume ad absurdum that $f$ is a homomorphism of the Yang-Baxter solutions. Then
$$
\begin{aligned}
(f\times f)\check r_1&= r_z(f\times f),\\ 
f(-_sa+_sa\bullet b)&=f(a)\circ f(b)-f(a)\circ z+z,\\   f((\sigma_a(b))^{-1}\bullet a\bullet b)&=\sigma_{f(a)}^z(f(b))^{-1}\circ f(a)\circ f(b),
\end{aligned}
$$
for all $a,b\in S$. Observe that $f(1)$ commutes with all $f(a)$, for $a\in S$,
$$
\begin{aligned}
f(a)&=f(\tau_1(a))=\tau^z_{f(1)}(f(a))=\sigma^z_{f(a)}(f(1))^{-1}\circ f(a)\circ f(1)=f(\sigma_a(1))^{-1}\circ f(a)\circ f(1)\\ &=f(1)^{-1}\circ f(a)\circ f(1),
\end{aligned}
$$
and thus $f(1)\circ f(a)=f(a)\circ f(1)$. Moreover, by taking $b=1$, we get that 
$$
f(1)=f(a)\circ f(1)-f(a)\circ z+z\ \&\ -f(a)\circ f(1)+f(1)=-f(a)\circ z+z.
$$
Thus, $\sigma^z=\sigma^{f(1)}$ and $\tau^z=\tau^{f(1)}$. Then for all $a\in S$,
$$
f(a)=f(\sigma_1(a))=\sigma^{f(1)}_{f(1)}(f(a))=f(1)\circ f(a)-f(1)^2+f(1),
$$
and $-f(1)\circ f(a)+f(a)=-f(1)^2+f(1)$, which for $f(a)=1$ gives $f(1)^2=f(1)+f(1)$. By simple substitution we get $-f(1)\circ f(a)+f(a)=-f(1)$, and thus $f(a)+f(1)=f(1)\circ f(a)=f(a)\circ f(1)$.
Thus, finally,
$$
f(1)=f(a)\circ f(1)-f(a)\circ z+z\implies f(1)=f(a)+f(1)-f(a)\circ z+z,
$$
but for $f(a)=\eta$, we have that $f(1)+\eta=\eta+f(1)$, and 
$$
\eta\circ z=z+\eta,
$$
which contradicts with the assumption. Thus $f$ is not a homomorphism of Yang-Baxter solutions.
\end{proof}

\begin{corollary}
Observe that if $B$ is a left brace and $z+a\not=a\circ z$ for all $a\in B$, then there does not exists a skew brace $S$ such that $r_1:S\times S\to S\times S$ is isomorphic to $r_z:B\times B\to B\times B$, that is every surjection on the left brace satisfies all the assumptions of the map from Proposition \ref{ppm}.
\end{corollary}

We conclude the section by presenting the following example of a solution in which $a\mapsto \tau^z_a$ is not a right action of $(B,\circ)$, and the solution is not isomorphic to any other solution with parameter $1$.

\begin{example}\label{ex:par}
Let us consider a two-sided brace $U(\mathbb{Z}/16\mathbb{Z})$ as in Example \ref{ex:cyclicbraces}. Observe that in this case $\check r_7$ is not equivalent to $\check r_1$ as $5-1+7=11\pmod{16}$ and $5\circ 7=3\pmod{16}$. 
One can easily compute that 
$$
\tau^7_{15}(5)=5\pmod{16}\quad \&\quad \tau^7_{3}\tau^7_5(5)=13\pmod{16}.
$$
\end{example}

\section{Preliminaries on Quasi-bialgebras \&  Drinfeld twists }

\subsection{Quasi-bialgebras}
\noindent In this subsection we recall fundamental definitions on quasi-bialgebras and the notion 
of quasi-triangularity as well as some recent 
relevant results presented in \cite{DOGHVL}.
\begin{definition}{\label{definition1}} A quasi-bialgebra $\big ({\mathcal  A}, \Delta, \epsilon, \Phi, c_r, c_l \big )$ is a 
unital associative algebra ${\mathcal A}$ over some field $k$ with the following algebra homomorphisms:
\begin{itemize}
\item the co-product $\Delta: {\mathcal A} \to {\mathcal A} \otimes {\mathcal A}$
\item the co-unit $\epsilon: {\mathcal A} \to k$
\end{itemize}
together with the invertible element $\Phi\in {\mathcal A} \otimes {\mathcal A} \otimes {\mathcal A}$ 
(the associator) and the invertible elements $c_l, c_r \in {\mathcal A}$ (unit constraints),  such that:
\begin{enumerate}
\item $(\mbox{id} \otimes \Delta) \Delta(a)  = \Phi \Big ((\Delta \otimes \mbox{id}) \Delta(a)\Big ) \Phi^{-1},$ $\forall a \in {\mathcal A}.$
\item $\Big ((\mbox{id} \otimes \mbox{id} \otimes \Delta)\Phi \Big ) \Big ((\Delta \otimes \mbox{id} \otimes \mbox{id})\Phi \Big )= 
\Big (1\otimes \Phi \Big )  \Big ((\mbox{id} \otimes \Delta \otimes \mbox{id} )\Phi \Big ) \Big  (\Phi \otimes 1 \Big ).$
\item $(\epsilon \otimes \mbox{id})\Delta(a) = c_l^{-1} a c_l$ and $(\mbox{id} \otimes \epsilon)\Delta(a) = c_r^{-1} a c_r,$ 
$\forall a  \in {\mathcal A}.$
\item $(\mbox{id} \otimes \epsilon \otimes \mbox{id})\Phi = c_r \otimes c_l^{-1}.$
\end{enumerate}
\end{definition}
\noindent In the special case where $\Phi = 1 \otimes 1 \otimes 1$ one recovers a bialgebra, i.e.  co-associativity is restored.

Using the axioms of Definition \ref{definition1} further counit formulas 
for the associator and unit constraints can be derived \cite{DOGHVL}.
\begin{lemma} ({\it Lemma \cite{DOGHVL}}) {\label{lemma0}}
Let  $\big ({\mathcal  A}, \Delta, \epsilon, \Phi, c_r, c_l \big )$ be a quasi-bialgebra, then:
\begin{eqnarray}
(\epsilon \otimes \mbox{id} \otimes \mbox{id})\Phi = \Delta(c_l^{-1})(c_l\otimes 1), \quad
(\mbox{id} \otimes \mbox{id} \otimes \epsilon)\Phi =(1 \otimes c_r^{-1})\Delta(c_r)\quad  
\& \quad  \epsilon(c_l) = \epsilon(c_r). \nonumber 
\end{eqnarray}
\end{lemma}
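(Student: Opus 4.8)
The plan is to exploit the pentagon identity (item (2) of Definition \ref{definition1}) together with the counit/unit relations (items (3)--(4)), the whole strategy being to collapse the four--fold pentagon, which lives in ${\mathcal A}^{\otimes 4}$, down to ${\mathcal A}^{\otimes 2}$ by applying $\epsilon$ to two of its four tensor legs. The structural fact I will use throughout is that $\epsilon\colon{\mathcal A}\to k$ is an algebra homomorphism, so any map of the form ``$\epsilon$ on a fixed pair of legs, $\mbox{id}$ elsewhere'' is an algebra homomorphism ${\mathcal A}^{\otimes 4}\to{\mathcal A}^{\otimes 2}$; hence it may be applied to the pentagon factor by factor and it sends inverses to inverses. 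In particular, writing $P=(\epsilon\otimes\mbox{id}\otimes\mbox{id})\Phi$ and $P'=(\mbox{id}\otimes\mbox{id}\otimes\epsilon)\Phi$, both $P$ and $P'$ are invertible (being images of the invertible $\Phi$), and $\epsilon(c_l),\epsilon(c_r)$ are invertible scalars since $c_l,c_r$ are invertible.

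First I would record the auxiliary reductions that feed the computation. From item (3), $(\epsilon\otimes\mbox{id})\Delta(x)=c_l^{-1}xc_l$ and $(\mbox{id}\otimes\epsilon)\Delta(x)=c_r^{-1}xc_r$, whence $(\epsilon\otimes\epsilon)\Delta(x)=\epsilon(x)$. Applying one further $\epsilon$ to item (4) gives $(\epsilon\otimes\epsilon\otimes\mbox{id})\Phi=\epsilon(c_r)c_l^{-1}$ and $(\mbox{id}\otimes\epsilon\otimes\epsilon)\Phi=\epsilon(c_l)^{-1}c_r$. These are exactly the values to which the various embedded copies of $\Phi$ and of $\Delta$ reduce once two legs are killed.

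For the first identity I would apply $(\epsilon\otimes\epsilon\otimes\mbox{id}\otimes\mbox{id})$ to the pentagon. Evaluating the five factors of item (2) one at a time with the reductions above, the left--hand side becomes $\epsilon(c_r)\,\Delta(c_l^{-1})\,P$ and the right--hand side becomes $\epsilon(c_r)\,P\,(c_l^{-1}\otimes1)\,P$ (the pair $(c_l\otimes1)(c_l^{-1}\otimes1)$ collapsing to $1$). Cancelling the scalar $\epsilon(c_r)$ and then the invertible $P$ on the right yields $\Delta(c_l^{-1})=P\,(c_l^{-1}\otimes1)$, i.e. $(\epsilon\otimes\mbox{id}\otimes\mbox{id})\Phi=\Delta(c_l^{-1})(c_l\otimes1)$. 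The second identity is entirely parallel: applying $(\mbox{id}\otimes\mbox{id}\otimes\epsilon\otimes\epsilon)$ to the pentagon produces $\epsilon(c_l)^{-1}P'\Delta(c_r)=\epsilon(c_l)^{-1}P'(1\otimes c_r)P'$, and cancelling the scalar together with the invertible $P'$ \emph{on the left} gives $\Delta(c_r)=(1\otimes c_r)P'$, i.e. $(\mbox{id}\otimes\mbox{id}\otimes\epsilon)\Phi=(1\otimes c_r^{-1})\Delta(c_r)$. Finally, for $\epsilon(c_l)=\epsilon(c_r)$ I would simply apply $(\epsilon\otimes\mbox{id})$ to the first identity: the left side is $(\epsilon\otimes\epsilon\otimes\mbox{id})\Phi=\epsilon(c_r)c_l^{-1}$ while the right side computes, via item (3), to $\epsilon(c_l)c_l^{-1}$, forcing $\epsilon(c_l)=\epsilon(c_r)$ (alternatively one collapses the pentagon by $(\mbox{id}\otimes\epsilon\otimes\epsilon\otimes\mbox{id})$ and cancels the invertible $c_r^{2}\otimes c_l^{-2}$).

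The routine but genuinely error--prone part, and the main obstacle, is the leg--bookkeeping in the two collapses: for each of the five factors one must track precisely which of the four legs carries $\epsilon$, and in particular whether a killed leg is a ``bare'' leg of $\Phi$ or one produced by an internal $\Delta$, since in the latter case one must invoke the correct one--sided relation from item (3), the left copy producing conjugation by $c_l$ and the right copy conjugation by $c_r$, rather than a naive counit. Keeping this left/right asymmetry straight is exactly what makes the surviving factors combine into the clean forms above, so that a single invertible element ($P$, respectively $P'$) can be cancelled to isolate the answer.
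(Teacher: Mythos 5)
Your proof is correct: the collapses of the pentagon under $(\epsilon\otimes\epsilon\otimes\mbox{id}\otimes\mbox{id})$ and $(\mbox{id}\otimes\mbox{id}\otimes\epsilon\otimes\epsilon)$, the careful use of the one-sided counit relations $(\epsilon \otimes \mbox{id})\Delta(a) = c_l^{-1} a c_l$, $(\mbox{id} \otimes \epsilon)\Delta(a) = c_r^{-1} a c_r$ for legs produced by internal coproducts, and the cancellation of the invertible elements $P$, $P'$ all check out, as does the final comparison giving $\epsilon(c_l)=\epsilon(c_r)$. The paper itself states this lemma without proof, deferring to the cited reference \cite{DOGHVL}, and your argument is exactly the standard derivation used there (counit maps applied to the pentagon axiom combined with axioms (3)--(4)), so there is nothing to add.
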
 

We introduce here some useful notation. 
Let $\pi: {\mathcal A} \otimes {\mathcal A} \to {\mathcal A} \otimes {\mathcal A}$ be the 
``flip'' map, such that $a\otimes b \mapsto b \otimes a$ $\forall a,b \in {\mathcal A},$ 
then we set $\Delta^{(op)} := \pi \circ \Delta.$ A quasi-bialgebra is called {\it cocommutative} if $\Delta^{(op)}= \Delta.$ 
We also  consider the general element $A = \sum_j a_j \otimes b_j \in {\mathcal A} \otimes {\mathcal A}$, 
then in the ``index'' notation we denote:
$A_{12} :=  \sum_j a_j\otimes b_j  \otimes 1,$ $A_{23} :=\sum_j 1\otimes   a_j\otimes b_j $ and $A_{13} :=  
\sum_j a_j \otimes 1\otimes b_j .$

The notion of quasi-triangularity for bialgebras extends to quasi-bialgebras \cite{Drintw, Kassel}.
\begin{definition}{\label{definition2}}  A quasi-bialgebra $\big ({\mathcal  A}, \Delta, \epsilon, \Phi \big )$ is called 
quasi-triangular (or braided) if an invertible element ${\mathcal R} \in {\mathcal A} \otimes {\mathcal A}$ 
(universal $R$-matrix) exists, such that
\begin{enumerate}
\item $\Delta^{(op)}(a) {\mathcal R} = {\mathcal R} \Delta(a),$ $\forall a \in {\mathcal A}.$
\item $(\mbox{id} \otimes \Delta){\mathcal R} = \Phi^{-1}_{231} {\mathcal R}_{13} \Phi_{213} {\mathcal R}_{12} \Phi^{-1}_{123}.$
\item $(\Delta \otimes \mbox{id}){\mathcal R} = \Phi_{312} {\mathcal R}_{13} \Phi_{132}^{-1} {\mathcal R}_{23} \Phi_{123}.$
\end{enumerate}
\end{definition}

From the axioms (1)-(3) of Definition \ref{definition2} and condition (3) of Definition \ref{definition1} 
one deduces that $(\epsilon \otimes \mbox{id}) {\mathcal R}= c_r^{-1} c_l$ and $(\mbox{id} \otimes \epsilon) {\mathcal R}= c_l^{-1} c_r$.
Moreover, by means of conditions (1)-(3) of Definition \ref{definition2}, it follows that ${\mathcal R}$ satisfies a non-associative
version of the quantum Yang Baxter equation (QYBE)
\begin{equation}
{\mathcal R}_{12} \Phi_{312} {\mathcal R}_{13} \Phi_{132}^{-1} {\mathcal R}_{23} \Phi_{123} =  
\Phi_{321}{\mathcal R}_{23}\Phi^{-1}_{231} {\mathcal R}_{13} \Phi_{213} {\mathcal R}_{12}. \label{modYBE} \nonumber
\end{equation}
\noindent In the case of $\Phi = 1 \otimes 1 \otimes 1$ one recovers the familiar QYBE 
and a quasi-triangular bialgebra.

In the following proposition we consider two special cases of the general algebraic setting for quasi-triangular 
quasi-bialgebras as described above. This setup,  introduced in \cite{DOGHVL}, will be particularly 
relevant to the findings of the next section.
\begin{proposition}(\it Proposition \cite{DOGHVL}){\label{lemma1}} Let $\big ({\mathcal  A}, \Delta, \epsilon, \Phi, {\mathcal R} \big )$  
be a quasi-triangular quasi-bialgebra, then the following two statements hold:
\begin{enumerate}
\item Suppose $\Phi$  satisfies the condition (in the index notation)
$\Phi_{213}{\mathcal R}_{12} = {\mathcal R}_{12} \Phi_{123},$ then
\begin{eqnarray}
(\mbox{id} \otimes \Delta){\mathcal R} =  \Phi^{-1}_{231} {\mathcal R}_{13}{\mathcal R}_{12}, \quad   
(\Delta \otimes \mbox{id}){\mathcal R} = {\mathcal R}_{13}  {\mathcal R}_{23} \Phi_{123}, \label{2} \nonumber
\end{eqnarray}
and the universal  ${\mathcal R}$ matrix satisfies the usual YBE. Also,
$(\epsilon \otimes \mbox{id}){\mathcal R} =  c_l,$ $~(\mbox{id} \otimes \epsilon){\mathcal R} = c_l^{-1} $ and $c_r =1_{\mathcal A}.$

\item Suppose $\Phi$  satisfies  the condition $ \Phi_{132} {\mathcal R}_{23} ={\mathcal R}_{23}\Phi_{123},$
then
\begin{eqnarray}
 (\mbox{id} \otimes \Delta){\mathcal R} =  {\mathcal R}_{13}{\mathcal R}_{12}\Phi^{-1}_{123}, \quad
 (\Delta \otimes \mbox{id}){\mathcal R} = \Phi_{312} {\mathcal R}_{13}  {\mathcal R}_{23},  \label{22} \nonumber
\end{eqnarray}
and the universal  ${\mathcal R}$ matrix satisfies the usual YBE. Also,
$(\epsilon \otimes \mbox{id}){\mathcal R} = c_r^{-1},$ $~(\mbox{id} \otimes \epsilon){\mathcal R} = 
c_r$ and $c_l=1_{\mathcal A}.$ 
\end{enumerate}
\end{proposition}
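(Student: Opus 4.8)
The plan is to treat each part by the same mechanism: the hypothesis is a single relation between $\Phi$ and $\mathcal{R}$ in $\mathcal{A}\otimes\mathcal{A}\otimes\mathcal{A}$, and the crucial observation is that every permutation of the three tensor legs is an algebra automorphism of $\mathcal{A}^{\otimes 3}$ (multiplication is componentwise, so a relabelling commutes with products and preserves inverses). Applying the elements of $S_3$ to the one hypothesis thus manufactures a whole family of conjugation relations. For part (1), starting from $\Phi_{213}\mathcal{R}_{12}=\mathcal{R}_{12}\Phi_{123}$, the transposition $2\leftrightarrow 3$ yields $\Phi_{312}\mathcal{R}_{13}=\mathcal{R}_{13}\Phi_{132}$ and the cycle $(1\,2\,3)$ yields $\Phi_{321}\mathcal{R}_{23}=\mathcal{R}_{23}\Phi_{231}$. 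These are exactly the identities needed to collapse the spurious $\Phi$-factors in the quasitriangularity axioms of Definition \ref{definition2}.

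For the coproduct formulas I would substitute $\Phi_{213}\mathcal{R}_{12}\Phi_{123}^{-1}=\mathcal{R}_{12}$ into axiom (2) to get $(\mbox{id}\otimes\Delta)\mathcal{R}=\Phi^{-1}_{231}\mathcal{R}_{13}\mathcal{R}_{12}$, and the relabelled identity $\Phi_{312}\mathcal{R}_{13}\Phi_{132}^{-1}=\mathcal{R}_{13}$ into axiom (3) to get $(\Delta\otimes\mbox{id})\mathcal{R}=\mathcal{R}_{13}\mathcal{R}_{23}\Phi_{123}$. For the Yang--Baxter equation I would feed the three relabelled relations into the non-associative YBE recorded after Definition \ref{definition2}: the left-hand side collapses (via $\Phi_{312}\mathcal{R}_{13}\Phi_{132}^{-1}=\mathcal{R}_{13}$) to $\mathcal{R}_{12}\mathcal{R}_{13}\mathcal{R}_{23}\Phi_{123}$ and the right-hand side (via $\Phi_{321}\mathcal{R}_{23}\Phi^{-1}_{231}=\mathcal{R}_{23}$ and $\Phi_{213}\mathcal{R}_{12}=\mathcal{R}_{12}\Phi_{123}$) to $\mathcal{R}_{23}\mathcal{R}_{13}\mathcal{R}_{12}\Phi_{123}$; cancelling the invertible $\Phi_{123}$ on the right leaves the ordinary YBE.

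The unit-constraint and counit statements are the delicate part. I would start from the general identities $(\epsilon\otimes\mbox{id})\mathcal{R}=c_r^{-1}c_l$ and $(\mbox{id}\otimes\epsilon)\mathcal{R}=c_l^{-1}c_r$ recorded after Definition \ref{definition2}, so everything reduces to showing $c_r=1_{\mathcal{A}}$. Applying the projection $\mbox{id}\otimes\epsilon\otimes\mbox{id}$ directly to the hypothesis $\Phi_{213}\mathcal{R}_{12}=\mathcal{R}_{12}\Phi_{123}$ and evaluating the three ways the counit can hit $\Phi$ by means of Lemma \ref{lemma0} and axioms (3),(4) of Definition \ref{definition1} (namely $(\mbox{id}\otimes\epsilon\otimes\mbox{id})\Phi=c_r\otimes c_l^{-1}$ and $(\epsilon\otimes\mbox{id}\otimes\mbox{id})\Phi=\Delta(c_l^{-1})(c_l\otimes 1)$), together with the intertwining axiom $\Delta^{(op)}(a)\mathcal{R}=\mathcal{R}\Delta(a)$, yields a relation expressing $\Delta(c_l)$ in terms of $c_l$ and $c_r$. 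A further application of $\mbox{id}\otimes\epsilon$, using $\epsilon(c_l)=\epsilon(c_r)$ from Lemma \ref{lemma0}, then forces $c_r=\epsilon(c_l)\,1_{\mathcal{A}}$, so under the standard normalisation $\epsilon(c_l)=\epsilon(c_r)=1$ one obtains $c_r=1_{\mathcal{A}}$. Feeding $c_r=1_{\mathcal{A}}$ back into the two general identities gives $(\epsilon\otimes\mbox{id})\mathcal{R}=c_l$ and $(\mbox{id}\otimes\epsilon)\mathcal{R}=c_l^{-1}$. Part (2) is the exact mirror image: its hypothesis $\Phi_{132}\mathcal{R}_{23}=\mathcal{R}_{23}\Phi_{123}$ together with the relabellings $\Phi_{231}\mathcal{R}_{13}=\mathcal{R}_{13}\Phi_{213}$ (transposition $1\leftrightarrow2$) and $\Phi_{321}\mathcal{R}_{12}=\mathcal{R}_{12}\Phi_{312}$ (cycle $(1\,3\,2)$) collapses axioms (2),(3) to the stated formulas and the non-associative YBE to the ordinary one after pulling out $\Phi_{321}$ on both sides; the counit computation is identical with the roles of $c_l$ and $c_r$ interchanged, yielding $c_l=1_{\mathcal{A}}$, $(\epsilon\otimes\mbox{id})\mathcal{R}=c_r^{-1}$, $(\mbox{id}\otimes\epsilon)\mathcal{R}=c_r$.

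I expect the mechanical substitutions into the axioms to be routine once the $S_3$-relabelling dictionary for the subscripts is fixed. The genuine obstacle is the identification of the unit constraints: one must track precisely how each counit projection acts not only on $\Phi$ but also on the displaced factor $\Phi^{-1}_{231}$ (which forces use of the flip together with Lemma \ref{lemma0}), combine this with the intertwining axiom for $\mathcal{R}$, and invoke the normalisation $\epsilon(c_l)=\epsilon(c_r)=1$ to pass from $c_r=\epsilon(c_l)\,1_{\mathcal{A}}$ to $c_r=1_{\mathcal{A}}$. This is where an otherwise formal calculation could silently break down if the conventions on index notation or the counit normalisation are not handled consistently.
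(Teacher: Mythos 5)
The paper itself contains no proof of Proposition \ref{lemma1}: the statement is imported from \cite{DOGHVL}, and the text immediately after it defers the detailed proof to that reference, so there is nothing in-paper to compare your argument against. Judged on its own terms, your proof is correct and is surely the intended one. The key observation that leg-permutations are algebra automorphisms of ${\mathcal A}^{\otimes 3}$, so that the single hypothesis $\Phi_{213}{\mathcal R}_{12}={\mathcal R}_{12}\Phi_{123}$ propagates to $\Phi_{312}{\mathcal R}_{13}={\mathcal R}_{13}\Phi_{132}$ and $\Phi_{321}{\mathcal R}_{23}={\mathcal R}_{23}\Phi_{231}$, is valid, and your substitutions into axioms (2)--(3) of Definition \ref{definition2} and into the non-associative YBE check out: the two sides collapse to ${\mathcal R}_{12}{\mathcal R}_{13}{\mathcal R}_{23}\Phi_{123}$ and ${\mathcal R}_{23}{\mathcal R}_{13}{\mathcal R}_{12}\Phi_{123}$ respectively, after which the invertible factor $\Phi_{123}$ cancels; the mirror computation for part (2) works the same way.

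Your instinct about where the danger lies is exactly right, and it can be made precise. Applying $\mbox{id}\otimes\epsilon\otimes\mbox{id}$ to the hypothesis, with Lemma \ref{lemma0}, axiom (4) of Definition \ref{definition1}, and $(\mbox{id}\otimes\epsilon){\mathcal R}=c_l^{-1}c_r$, gives $\Delta(c_l)=c_r^{-1}c_l\otimes c_l$; then applying $\mbox{id}\otimes\epsilon$ and using axiom (3) of Definition \ref{definition1} at $a=c_l$ (the identity $\epsilon(c_l)=\epsilon(c_r)$ is not actually needed at this step) forces $c_r=\epsilon(c_l)\,1_{\mathcal A}$, exactly as you say. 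However, the passage from this to $c_r=1_{\mathcal A}$ cannot be made from the axioms as stated in this paper: every axiom in Definitions \ref{definition1} and \ref{definition2} and every identity in Lemma \ref{lemma0} is invariant under the simultaneous rescaling $(c_l,c_r)\mapsto(\lambda c_l,\lambda c_r)$, $\lambda\in k^{\times}$, whereas the conclusions $c_r=1_{\mathcal A}$ and $(\epsilon\otimes\mbox{id}){\mathcal R}=c_l$ are not invariant. Hence the normalisation $\epsilon(c_l)=\epsilon(c_r)=1$ you invoke is a genuine additional convention (implicitly in force in \cite{DOGHVL}), not a derivable fact; your proof establishes the strongest normalisation-independent form of the claim, namely that $c_r$ is a scalar multiple of $1_{\mathcal A}$, and correctly isolates the residual scalar as a matter of convention. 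This is a defect of the statement as transplanted into the present paper rather than a gap in your argument.
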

\noindent The detailed proof of the proposition above is given in \cite{DOGHVL}.

\subsection{Drinfeld twists} 

We recall here basic facts about the twisting of quasi-bialgebras 
and the generalized results obtained in \cite{DOGHVL}.
Drinfeld showed in \cite{Drinfeld, Drintw}  that
the property of being quasi-triangular  (quasi-)bialgebra is
preserved by using a suitable twist ${\mathcal F} \in {\mathcal A}  \otimes{\mathcal A}.$ Usually
whenever the notion of Drinfeld twist is discussed in the literature
a trivial action of the co-unit on the twist  is almost always assumed, i.e. 
$(\epsilon \otimes \mbox{id}){\mathcal F} =(\mbox{id} \otimes \epsilon){\mathcal F} = 1.$
In \cite{DOGHVL}
this condition is relaxed and the most general scenario is examined.
We should note that in \cite{Drintw}  certain types of
simple twists without this restricted counit action are used  to twist quasi-bialgebras 
with nontrivial unit constraints to quasi-bialgebras with trivial unit constraints
(see also detailed discussion on this point in \cite{DOGHVL}, where a more general framework is discussed).

The following \cite{DOGHVL} is a generalization of Drinfeld's result \cite{Drintw} 
on twists for quasi-bialgebras to the case of nontrivial unit constraints. 
Recall, without loss of generality,  that the unit constraints are expressed in terms 
of the associator Lemma \ref{lemma0}.
\begin{proposition}({\it Proposition \cite{DOGHVL, Drintw}}){\label{twist}} 
Let $\big ({\mathcal A},\Delta, \epsilon, \Phi, {\mathcal R} \big )$ be a 
quasi-triangular quasi-bialgebra and let ${\mathcal F} \in {\mathcal A} \otimes {\mathcal A}$ 
be an invertible element, such that
\begin{eqnarray}
&& \Delta_{\mathcal F}(a) = {\mathcal F} \Delta(a) {\mathcal F}^{-1}, ~~\forall a \in {\mathcal A} \\
&& \Phi_{\mathcal F} ({\mathcal F} \otimes 1) \big  ((\Delta \otimes \mbox{id}){\mathcal F} \big ) =  
 (1\otimes {\mathcal F})\big ((\mbox{id} \otimes \Delta){\mathcal F}\big ) \Phi \label{fi}\\
&& {\mathcal R}_{\mathcal F} = {\mathcal F}^{(op)} {\mathcal R} {\mathcal F}^{-1} \label{R}
\end{eqnarray}
where ${\mathcal F}^{(op)} := \pi({\mathcal F}),$ (recall $\pi$ is the flip map). 
Then  $\big ({\mathcal A},\Delta_{\mathcal F}, \epsilon, \Phi_{\mathcal F},  {\mathcal R}_{\mathcal F} \big )$ 
is also a quasi-triangular quasi-bialgebra.
\end{proposition}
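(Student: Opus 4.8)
The plan is to check the axioms of Definitions~\ref{definition1} and~\ref{definition2} for the twisted data one by one, in each case reducing the statement to the corresponding axiom for the untwisted quasi-triangular quasi-bialgebra $\big({\mathcal A},\Delta,\epsilon,\Phi,{\mathcal R}\big)$ together with the defining relation~\eqref{fi} for $\Phi_{\mathcal F}$. The workhorse throughout is the pair of ``single leg'' identities, valid for every $X\in{\mathcal A}\otimes{\mathcal A}$,
\begin{equation}
(\mbox{id}\otimes\Delta_{\mathcal F})X=(1\otimes{\mathcal F})\big[(\mbox{id}\otimes\Delta)X\big](1\otimes{\mathcal F}^{-1}),\qquad (\Delta_{\mathcal F}\otimes\mbox{id})X=({\mathcal F}\otimes 1)\big[(\Delta\otimes\mbox{id})X\big]({\mathcal F}^{-1}\otimes 1),\nonumber
\end{equation}
which follow at once from $\Delta_{\mathcal F}={\mathcal F}\Delta(\cdot){\mathcal F}^{-1}$ and the fact that $\mbox{id}\otimes\Delta$ and $\Delta\otimes\mbox{id}$ are algebra homomorphisms. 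Since $\Delta_{\mathcal F}$ is the conjugate of the algebra homomorphism $\Delta$ by the invertible ${\mathcal F}$, it is itself an algebra homomorphism, so that requirement needs no separate argument.

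First I would dispatch the two ``conjugation'' axioms, namely axiom~(1) of Definition~\ref{definition1} and axiom~(1) of Definition~\ref{definition2}, which are immediate. Applying the leg identities to $X=\Delta_{\mathcal F}(a)$ and then inserting the untwisted relation $(\mbox{id}\otimes\Delta)\Delta(a)=\Phi\big[(\Delta\otimes\mbox{id})\Delta(a)\big]\Phi^{-1}$ gives
\begin{equation}
(\mbox{id}\otimes\Delta_{\mathcal F})\Delta_{\mathcal F}(a)=U\big[(\Delta\otimes\mbox{id})\Delta(a)\big]U^{-1},\qquad (\Delta_{\mathcal F}\otimes\mbox{id})\Delta_{\mathcal F}(a)=V\big[(\Delta\otimes\mbox{id})\Delta(a)\big]V^{-1},\nonumber
\end{equation}
with $U=(1\otimes{\mathcal F})\big[(\mbox{id}\otimes\Delta){\mathcal F}\big]\Phi$ and $V=({\mathcal F}\otimes 1)\big[(\Delta\otimes\mbox{id}){\mathcal F}\big]$. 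Relation~\eqref{fi} says precisely $\Phi_{\mathcal F}V=U$, so $\Phi_{\mathcal F}=UV^{-1}$ (in particular $\Phi_{\mathcal F}$ is invertible), and substituting this yields $\Phi_{\mathcal F}\big[(\Delta_{\mathcal F}\otimes\mbox{id})\Delta_{\mathcal F}(a)\big]\Phi_{\mathcal F}^{-1}=(\mbox{id}\otimes\Delta_{\mathcal F})\Delta_{\mathcal F}(a)$, which is axiom~(1). For the intertwining axiom~(1) of Definition~\ref{definition2}, I would write $\Delta_{\mathcal F}^{(op)}(a)={\mathcal F}^{(op)}\Delta^{(op)}(a)({\mathcal F}^{(op)})^{-1}$ and ${\mathcal R}_{\mathcal F}={\mathcal F}^{(op)}{\mathcal R}{\mathcal F}^{-1}$; both $\Delta_{\mathcal F}^{(op)}(a){\mathcal R}_{\mathcal F}$ and ${\mathcal R}_{\mathcal F}\Delta_{\mathcal F}(a)$ then collapse to ${\mathcal F}^{(op)}{\mathcal R}\Delta(a){\mathcal F}^{-1}$ after a single use of $\Delta^{(op)}(a){\mathcal R}={\mathcal R}\Delta(a)$.

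Next I would produce the twisted unit constraints. Setting $f_l=(\epsilon\otimes\mbox{id}){\mathcal F}$ and $f_r=(\mbox{id}\otimes\epsilon){\mathcal F}$ in ${\mathcal A}$, I define $c_l^{\mathcal F}=c_lf_l^{-1}$ and $c_r^{\mathcal F}=c_rf_r^{-1}$. Because $\epsilon\otimes\mbox{id}$ and $\mbox{id}\otimes\epsilon$ are algebra homomorphisms into ${\mathcal A}$, applying them to $\Delta_{\mathcal F}(a)={\mathcal F}\Delta(a){\mathcal F}^{-1}$ and using axiom~(3) of Definition~\ref{definition1} for the untwisted structure gives $(\epsilon\otimes\mbox{id})\Delta_{\mathcal F}(a)=(c_l^{\mathcal F})^{-1}a\,c_l^{\mathcal F}$ and $(\mbox{id}\otimes\epsilon)\Delta_{\mathcal F}(a)=(c_r^{\mathcal F})^{-1}a\,c_r^{\mathcal F}$, which is axiom~(3). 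Axiom~(4), namely $(\mbox{id}\otimes\epsilon\otimes\mbox{id})\Phi_{\mathcal F}=c_r^{\mathcal F}\otimes(c_l^{\mathcal F})^{-1}$, I would obtain by applying $\mbox{id}\otimes\epsilon\otimes\mbox{id}$ to~\eqref{fi} and simplifying the resulting product with the untwisted axiom~(4) and the counit formulas of Lemma~\ref{lemma0}.

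What remains --- the pentagon axiom~(2) of Definition~\ref{definition1} for $\Phi_{\mathcal F}$ and the two hexagon axioms~(2),(3) of Definition~\ref{definition2} for ${\mathcal R}_{\mathcal F}$ --- is where the genuine work lies, and I expect the pentagon to be the main obstacle. For the hexagons I would expand $(\mbox{id}\otimes\Delta_{\mathcal F}){\mathcal R}_{\mathcal F}$ and $(\Delta_{\mathcal F}\otimes\mbox{id}){\mathcal R}_{\mathcal F}$ through the leg identities, insert the untwisted hexagons for ${\mathcal R}$, and then reassemble the accumulated images of ${\mathcal F}$ together with the original $\Phi$-factors into the index-placed copies of $\Phi_{\mathcal F}$ and ${\mathcal R}_{\mathcal F}$ prescribed by~\eqref{R} and~\eqref{fi}. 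The pentagon is handled along the same lines: using~\eqref{fi} I would express each occurrence of $\Phi_{\mathcal F}$ in the twisted pentagon in terms of $\Phi$ and of the images of ${\mathcal F}$ under the various comultiplication maps, so that the twisted identity becomes a consequence of the untwisted pentagon~(2) of Definition~\ref{definition1}. The difficulty is purely organizational: one must track the placement of ${\mathcal F}$ across all four tensor legs and repeatedly use that $\Delta\otimes\mbox{id}\otimes\mbox{id}$, $\mbox{id}\otimes\Delta\otimes\mbox{id}$ and $\mbox{id}\otimes\mbox{id}\otimes\Delta$ are algebra homomorphisms, so that the many ${\mathcal F}$-contributions telescope and cancel and leave exactly the original pentagon, which holds by hypothesis. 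Assembling these verifications shows that $\big({\mathcal A},\Delta_{\mathcal F},\epsilon,\Phi_{\mathcal F},{\mathcal R}_{\mathcal F}\big)$ is again a quasi-triangular quasi-bialgebra.
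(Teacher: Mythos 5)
Your executed steps are correct, and they actually go further than the paper itself does here: the paper offers no in-house proof of Proposition \ref{twist}, deferring all details to \cite{DOGHVL} and recording only that the general (non-counital) twist ${\mathcal F}=\sum_j f_j\otimes g_j$ is considered, with invertible elements $v=(\epsilon\otimes\mbox{id}){\mathcal F}$ and $w=(\mbox{id}\otimes\epsilon){\mathcal F}$. Your conjugation argument is sound: $\Phi_{\mathcal F}=UV^{-1}$ follows from \eqref{fi}, giving quasi-coassociativity; both sides of the intertwining axiom collapse to ${\mathcal F}^{(op)}{\mathcal R}\Delta(a){\mathcal F}^{-1}$; and your twisted unit constraints $c_l^{\mathcal F}=c_l f_l^{-1}$, $c_r^{\mathcal F}=c_r f_r^{-1}$ (your $f_l$, $f_r$ being precisely the paper's $v$, $w$) are exactly the point of the generalization to nontrivial counit action that this proposition is meant to capture; applying $\mbox{id}\otimes\epsilon\otimes\mbox{id}$ to \eqref{fi} does indeed produce axiom (4) of Definition \ref{definition1} in the form $c_r^{\mathcal F}\otimes(c_l^{\mathcal F})^{-1}$.

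The gap is that the pentagon (axiom (2) of Definition \ref{definition1}) and the two hexagons (axioms (2)--(3) of Definition \ref{definition2}) -- which are the real content of Drinfeld's theorem -- are never carried out; you describe a plan and assert that the ${\mathcal F}$-factors ``telescope and cancel.'' As phrased, that assertion is too optimistic: the cancellation is not purely organizational and cannot be achieved with the homomorphism property of $\Delta\otimes\mbox{id}\otimes\mbox{id}$, $\mbox{id}\otimes\Delta\otimes\mbox{id}$, $\mbox{id}\otimes\mbox{id}\otimes\Delta$ alone, because $\Delta$ is not coassociative. Comparing, for instance, $(\mbox{id}\otimes\Delta\otimes\mbox{id})\bigl((\Delta\otimes\mbox{id}){\mathcal F}\bigr)$ with $(\Delta\otimes\mbox{id}\otimes\mbox{id})\bigl((\Delta\otimes\mbox{id}){\mathcal F}\bigr)$ forces repeated conjugation by $\Phi$ through axiom (1), and in the hexagons one must additionally invoke $\Delta^{(op)}(a){\mathcal R}={\mathcal R}\Delta(a)$ to move ${\mathcal R}$-factors past images of ${\mathcal F}$ and to handle terms like $(\mbox{id}\otimes\Delta){\mathcal F}^{(op)}$; it is exactly this interplay of associator insertions with the untwisted pentagon and hexagons that closes the computation. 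Since this is where the hypotheses \eqref{fi} and \eqref{R} are genuinely used and where a proof could fail, leaving it at the level of a sketch means the hard part of the statement remains unproved; completing it requires running these computations explicitly, as is done in \cite{DOGHVL} and originally by Drinfeld in \cite{Drintw}.
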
 
\noindent For the proof of Proposition \ref{twist} the general twist ${\mathcal F} = \sum_j f_j \otimes g_j$ is considered
(details on the proof of the proposition can be found in \cite{DOGHVL}).
In terms of the invertible elements $v:= \sum_{j}\epsilon(f_j) g_{j},$  $~w:= \sum_j \epsilon(g_j) f_j ,$ we have
$(\epsilon \otimes \mbox{id}){\mathcal F} = v,$ $\ (\mbox{id} \otimes \epsilon){\mathcal F} = w,$
so the trivial constraint is relaxed and indeed the most general scenario is regarded.

We introduce now a general frame, which will be appropriate when examining quantum 
algebras emerging from non-degenerate, set-theoretic solutions of the YBE presented in the next section,
compatible also with the analysis in \cite{Doikoutw, DOGHVL}. The following lemma is a generalization 
of Proposition 1.5 and Remark 1.9 of \cite{DOGHVL}, suitable
for the purposes of the next section.

\begin{lemma}{\label{remdr}} :
Let $\big ({\mathcal A},\Delta, \epsilon,\Phi, {\mathcal R} \big )$
and $\big ({\mathcal A},\Delta_{\mathcal F}, \epsilon, \Phi_{\mathcal F}, {\mathcal R}_{\mathcal F} \big )$ 
be quasi-triangular quasi-bialgebras  
and let the conditions of 
Proposition \ref{lemma1} hold. We recall also the useful notation: 
${\mathcal F}_{1,23} :=(\mbox{id} \otimes \Delta){\mathcal F},$ 
${\mathcal F}_{12,3} :=(\Delta \otimes \mbox{id}){\mathcal F},$ and by the quasi-bialgebra axioms 
${\mathcal F}_{21,3} {\mathcal R}_{12} = {\mathcal R}_{12} {\mathcal F}_{12,3},$ $ {\mathcal F}_{1,32} 
{\mathcal R}_{23} = {\mathcal R}_{23} {\mathcal F}_{1,23}. $ According to  Proposition \ref{lemma1} we distinguish two cases:
\begin{enumerate}
\item  If the associators satisfy 
\begin{equation}
\Phi_{213} {\mathcal R}_{12} = {\mathcal R}_{12}\Phi_{123},   \qquad \Phi_{{\mathcal F}213} 
{\mathcal R}_{12} = {\mathcal R}_{12}\Phi_{{\mathcal F}123}, \label{con1}
\end{equation} 
and $\Phi_{\mathcal F} $ commutes with $F_{12},$ then (in the index notation)
the condition (\ref{fi}) can be re-expressed as 
$~{\mathcal F}_{123} := {\mathcal F} _{23}{\mathcal F}_{1,23} = {\mathcal F}_{12} {\mathcal F}_{12,3}^*, $
where ${\mathcal F}_{12.3}^* = \Phi_{\mathcal F} {\mathcal F}_{12,3}\Phi^{-1}.$
We also  deduce that
${\mathcal F}^*_{21,3} {\mathcal R}_{12} = {\mathcal R}_{12} {\mathcal F}^*_{12,3}.$ 
This  is compatible also with the first part of Proposition \ref{lemma1}.
\item If the associator satisfies 
\begin{equation}
\Phi_{132} {\mathcal R}_{23} = {\mathcal R}_{23}\Phi_{123}, \qquad  \Phi_{{\mathcal F}132} 
{\mathcal R}_{23} = {\mathcal R}_{23}\Phi_{{\mathcal F}123}, \label{con2}
\end{equation}
and $\Phi_{\mathcal F} $ commutes with $F_{23},$  then 
then condition (\ref{fi}) is re-expressed as 
$~{\mathcal F}_{123} := {\mathcal F} _{23}{\mathcal F}^*_{1,23} = {\mathcal F}_{12} {\mathcal F}_{12,3}, \label{d2}$
where ${\mathcal F}_{1,23}^* = \Phi^{-1}_{\mathcal F} {\mathcal F}_{1,23}\Phi.$
We also  deduce that
$ {\mathcal F}^*_{1,32} {\mathcal R}_{23} = {\mathcal R}_{23} {\mathcal F}^*_{1,23}.$ 
This is compatible with the second part of Proposition \ref{lemma1}.
\end{enumerate} 
\end{lemma}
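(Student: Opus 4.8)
The plan is to work directly from the twist axiom (\ref{fi}), rewritten in the index notation using the abbreviations ${\mathcal F}_{12} = {\mathcal F}\otimes 1$, ${\mathcal F}_{23} = 1\otimes{\mathcal F}$, ${\mathcal F}_{12,3} = (\Delta\otimes\mbox{id}){\mathcal F}$ and ${\mathcal F}_{1,23} = (\mbox{id}\otimes\Delta){\mathcal F}$, so that (\ref{fi}) reads $\Phi_{\mathcal F}\,{\mathcal F}_{12}\,{\mathcal F}_{12,3} = {\mathcal F}_{23}\,{\mathcal F}_{1,23}\,\Phi$. In case (1) I would multiply this identity on the right by $\Phi^{-1}$ and then use the hypothesis that $\Phi_{\mathcal F}$ commutes with ${\mathcal F}_{12}$ to move $\Phi_{\mathcal F}$ past ${\mathcal F}_{12}$; recognising $\Phi_{\mathcal F}{\mathcal F}_{12,3}\Phi^{-1}$ as the definition of ${\mathcal F}^*_{12,3}$ then yields precisely ${\mathcal F}_{23}{\mathcal F}_{1,23} = {\mathcal F}_{12}{\mathcal F}^*_{12,3}$. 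Case (2) is handled symmetrically: there I would multiply (\ref{fi}) on the left by $\Phi_{\mathcal F}^{-1}$ and push it past ${\mathcal F}_{23}$ (now using that $\Phi_{\mathcal F}$ commutes with ${\mathcal F}_{23}$), so that $\Phi_{\mathcal F}^{-1}{\mathcal F}_{1,23}\Phi = {\mathcal F}^*_{1,23}$ appears and the axiom becomes ${\mathcal F}_{23}{\mathcal F}^*_{1,23} = {\mathcal F}_{12}{\mathcal F}_{12,3}$.

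For the intertwining relations I would start from the two braiding identities supplied by axiom (1) of Definition \ref{definition2}, namely ${\mathcal F}_{21,3}{\mathcal R}_{12} = {\mathcal R}_{12}{\mathcal F}_{12,3}$ and ${\mathcal F}_{1,32}{\mathcal R}_{23} = {\mathcal R}_{23}{\mathcal F}_{1,23}$ (these follow from $\Delta^{(op)}(a){\mathcal R} = {\mathcal R}\Delta(a)$ applied with $a$ a leg of ${\mathcal F}$, the remaining leg commuting through ${\mathcal R}$ in the untouched factor). In case (1) the target is ${\mathcal F}^*_{21,3}{\mathcal R}_{12} = {\mathcal R}_{12}{\mathcal F}^*_{12,3}$, where ${\mathcal F}^*_{21,3}$ is the flip of ${\mathcal F}^*_{12,3}$ in the first two legs, i.e. $\Phi_{{\mathcal F}213}{\mathcal F}_{21,3}\Phi_{213}^{-1}$. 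I would expand the left-hand side and apply, in order: the associator relation $\Phi_{213}^{-1}{\mathcal R}_{12} = {\mathcal R}_{12}\Phi_{123}^{-1}$ (a rearrangement of the first half of (\ref{con1})), then the base braiding relation ${\mathcal F}_{21,3}{\mathcal R}_{12} = {\mathcal R}_{12}{\mathcal F}_{12,3}$, and finally the twisted associator relation $\Phi_{{\mathcal F}213}{\mathcal R}_{12} = {\mathcal R}_{12}\Phi_{{\mathcal F}123}$ (the second half of (\ref{con1})); collecting the surviving factors reconstitutes ${\mathcal R}_{12}\Phi_{\mathcal F}{\mathcal F}_{12,3}\Phi^{-1} = {\mathcal R}_{12}{\mathcal F}^*_{12,3}$. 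Case (2) follows the same three-step substitution pattern with the roles of legs $(1,2)$ and $(2,3)$ interchanged, now invoking (\ref{con2}), the relation ${\mathcal F}_{1,32}{\mathcal R}_{23} = {\mathcal R}_{23}{\mathcal F}_{1,23}$, and ${\mathcal F}^*_{1,23} = \Phi_{\mathcal F}^{-1}{\mathcal F}_{1,23}\Phi$.

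The computations themselves are routine once the bookkeeping is fixed; I expect the only genuine subtlety, and the main place where care is needed, to be the correct interpretation of the flipped starred objects ${\mathcal F}^*_{21,3}$ and ${\mathcal F}^*_{1,32}$, that is, checking that conjugating ${\mathcal F}_{21,3}$ by the leg-swapped associators $\Phi_{{\mathcal F}213},\ \Phi_{213}$ is the legitimate ``$21,3$'' avatar of the definition ${\mathcal F}^*_{12,3} = \Phi_{\mathcal F}{\mathcal F}_{12,3}\Phi^{-1}$, and similarly for $\Phi_{{\mathcal F}132},\ \Phi_{132}$ in case (2). Once this is granted, each claimed identity drops out after a single pass of the three intertwining relations, and compatibility with Proposition \ref{lemma1} is immediate, since (\ref{con1}) and (\ref{con2}) are exactly the hypotheses of its two respective parts.
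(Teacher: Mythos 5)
Your proposal is correct and follows exactly the route the paper intends: the paper's own proof is a one-line appeal to Proposition \ref{twist} and the constraints (\ref{con1}), (\ref{con2}), and your computation — rewriting (\ref{fi}) as $\Phi_{\mathcal F}{\mathcal F}_{12}{\mathcal F}_{12,3} = {\mathcal F}_{23}{\mathcal F}_{1,23}\Phi$, pushing $\Phi_{\mathcal F}$ past ${\mathcal F}_{12}$ (resp.\ ${\mathcal F}_{23}$) via the commutation hypothesis, and then chaining the base intertwining relations with (\ref{con1}) (resp.\ (\ref{con2})) to get ${\mathcal F}^*_{21,3}{\mathcal R}_{12} = {\mathcal R}_{12}{\mathcal F}^*_{12,3}$ and ${\mathcal F}^*_{1,32}{\mathcal R}_{23} = {\mathcal R}_{23}{\mathcal F}^*_{1,23}$ — is precisely the detailed version of that argument, including the correct reading of the flipped starred objects.
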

\begin{proof}
The proof is straightforward due to Proposition \ref{twist} and constraints (\ref{con1}),  (\ref{con2}).
\end{proof}

\section{Non-involutive solutions of the YBE \& Drinfeld twists}

\noindent We move on to the study of non-involutive set-theoretic solutions of the YBE, 
admissible Drinfeld twists and the associated quasi-bialgebras.  From now on we work over the field $k = {\mathbb C}.$
We first review some fundamental results on the admissible Drinfeld twists for involutive
set-theoretic solutions of the YBE derived in \cite{Doikoutw} and we use these 
twists to extend these results to the non-involutive case.

Let $X$ be a set with $n$ elements and $\check{r}_z:X \times X\to X \times X$ 
be a solution of the set-theoretic braid equation, given in the previous section.  It is convenient 
for the purposes of this section to 
consider a free 
vector space $V= \mathbb{C}X$ of dimension equal to the cardinality of $X$. 
Let  ${\mathbb B} = \{e_x\}_{x\in X}$ be the basis of $V$ and ${\mathbb B}^* = \{e^*_x\}_{x\in X}$ 
be the dual basis: $e_x^* e_y= \delta_{x,y }.$
Let also $f\in V \otimes V$ be expressed as $f= \sum_{x,y\in X} f(x,y) e_x 
\otimes e_y,$ then  the set-theoretic solution is a map $r_z: V \otimes V\to V\otimes V,$ such that
$\ (\check r_z f)(x,y) = f(\sigma^z_x(y), \tau^z_y(x)),$
$\forall x,y\in X$; equivalently,
$$
\check r_z=\sum_{x,y\in X}e_{x,\sz{x}{y}}\otimes e_{y,\tz{y}{x}},
$$
where $e_{x,y}$ are  defined as $e_{x,y} =e_x  e_y^*.$  
In this construction $e_x$ corresponds to a $n$-column vector with 1 at the $x$ position and 0 
elsewhere,  $e_x^*= e^T$ ($^T$ denotes transposition) 
and $e_{x,y}$ is an $n\times n$ matrix with one identity 
entry in $x$-row and $y$-column, and zeros elsewhere, i.e $(e_{x,y})_{z,w}=\delta_{x,z}\delta_{y,w},$ 
and  $\check r_z$ is a $n^2\times n^2$ matrix. \footnote{This can be formally extended to the 
countably infinite case $(n \to \infty),$ 
provided that finite norm elements of the space are considered. 
The infinite case and possible connections with orthogonal polynomials will be discussed in detail elsewhere.}

The matrix, $\check r_z$ satisfies the braid equation:
$$
(\id\otimes \check r_z)(\check r_z\otimes \id)(\id\otimes \check r_z)=
(\check r_z\otimes \id)(\id\otimes \check r_z)(\check r_z\otimes \id).
$$

We also recall $r_z= {\mathcal P} \check r_z$, where ${\mathcal P} = \sum_{x, y\in X} e_{x,y}\otimes e_{y,x}$ 
is the permutation operator, then $r_z$ has the explicit form
\begin{equation}
r_z=\sum_{x, y\in X} e_{y, \sigma^z_x(y)}\otimes e_{x, \tau^z_y(x)}, \label{brace2} \nonumber
\end{equation}
i.e.  $(r_zf)(y,x) = f(\sigma^z_x(y), \tau^z_y(x))$, and satisfies the YBE.

\noindent Henceforth, we will drop the $z$-index in $\check r_z, \ \sigma^z, \ \tau^z $
for brevity,  although it is always implied.

Before we start discussing the admissible twists we recall the definitions of two quadratic 
algebras ${\mathcal A}$ and ${\mathcal Q}$ associated to set-theoretic solutions, which arise 
from the FRT (Faddeev, Reshetikhin and Takhtajan) construction \cite{FadTakRes}. 
This will be useful for our considerations later in the text.
Indeed, from the FRT construction
we recall:

\begin{definition}
Given a solution of the braid equation $\check r: V \to V$ ($V = {\mathbb C}X$),  
for a finite set $X,$ the associated quantum algebra ${\mathcal A}$ is a quotient of a 
free associative $C$-algebra, generated by $\{L_{z,w}|\ x,w \in X\},$ and relations
\begin{equation}
\check r_{12}\ L_1\ L_2 = L_1\ L_2\ \check r_{12}, \label{RTT} \
\end{equation} 
where  $\ L  = \sum_{x,y \in X} e_{x,y} \otimes L_{x,y}\in 
\mbox{End}(V) \otimes {\mathcal A}$. 
Recall  the {\it index notation}: $\check r_{12} = \check r \otimes 1_{\mathcal A}$ and
$L_1 = \sum_{z, w \in X} e_{z,w} \otimes I \otimes L_{z,w}, $ $\  L_2= \sum_{z, w \in X} I  \otimes  e_{z,w}  \otimes L_{z,w}.$
\end{definition}

From the fundamental relation (\ref{RTT}) for the set-theoretic solution of the braid equation \cite{ESS}:
 \begin{eqnarray}
L_{x, \hat x} L_{y, \hat y} = L_{\sigma_x(y), \sigma_{\hat x}(\hat y)} L_{\tau_y(x), \tau_{\hat y}(\hat x)}.
 \label{q11}
\end{eqnarray} 
See also \cite{DoiSmo1} for the algebra associated to the Baxterized $r$-matrix.

\begin{definition} 
Given a solution of the YBE $r: V \to V,$ the quadratic algebra ${\mathcal Q}$ is generated by $\{q_x|\ x \in X\}$  and relations
\begin{equation}
 r_{12}\ q_1\ q_2 = q_2\ q_1, \label{RTT2} 
\end{equation} 
where $\ q  = e_x \otimes q_x \in V \otimes {\mathcal Q}.$ Also, $r_{12} =r \otimes 1_{\mathcal A},$
$\ q_1 = \sum_{z, w \in X} e_{x} \otimes I \otimes q_{x},$ $\ q_2= \sum_{x \in X} I  \otimes  e_{x}  \otimes q_{x}.$
\end{definition}

The quadratic relation (\ref{RTT2}) for the set-theoretic solution of the YBE implies 
 \begin{eqnarray}
q_{x} q_{y} = q_{\sigma_x(y)} q_{\tau_y(x)}, \label{qalg}
\end{eqnarray} 
also obtained in \cite{ESS}.

It is worth noting that if we conveniently re-express $L= \sum_{x,y\in X} e_{y, \sigma_{x}(y)} 
\otimes L_{x, \tau_{y}(x)}$, imitating the form of the  set-theoretic $r$-matrix,  we conclude via (\ref{RTT}) that the relations of the corresponding quantum algebra ${\mathcal A}$ are,
\begin{equation}
 L_{\eta, \tau_{x}(\eta)} L_{\hat x, \tau_y(\hat x)} =  L_{\hat \eta, \tau_{\sigma_x(y)}(\hat \eta)}
L_{\bar x, \tau_{\tau_y(x)}(\bar x)}, \label{new}
\end{equation}
subject to: $\sigma_{\hat \eta}(\sigma_x(y)) = \sigma_{\sigma_{\eta}(x)}(\sigma_{\hat x}(y))$ and 
$ \tau_{\sigma_{\hat x}(y)}(\sigma_{\eta}(x)) =  \sigma_{\bar x}(\tau_y(x)).$
The quantum algebras ${\mathcal A}$ with relations (\ref{q11}), (\ref{new}) are also closely related to the reflection algebra for set-theoretic solutions \cite{DoiSmo2} and will be discussed in detail in future works.

We now briefly recall the notion of admissible Drinfeld twists for set-theoretic solutions.
It was shown in \cite{Doikoutw} that all involutive set-theoretic solutions 
can be obtained from the permutation operator via suitable twists.
Two distinct admissible twists $F,  \hat F \in \mbox{End} (V\otimes V)$ were identified in \cite{Doikoutw}. 
Indeed,  consider the invertible elements ${\mathbb V}_x 
=\sum_{y \in X} e_{\sigma_{x}(y), y}$ and ${\mathbb W}_y =\sum_{x\in X}e_{\tau_y(x), x},$ then 
\begin{equation}
F = \sum_{x\in X} e_{x,x} \otimes {\mathbb V}_x, \qquad 
\hat F =  \sum_{y\in X} {\mathbb W}_y\otimes e_{y,y}.  \label{twistsa}
\end{equation}
These are both admissible twists in the involutive case as was shown in \cite{Doikoutw}.

We will show in what follows that both twists (\ref{twistsa})
are still admissible in the case of non-involutive solutions and we will identify
the twisted $r$-matrices.  Note that, similarly to the involutive case analysed in \cite{DOGHVL}, 
the underlying algebra is not a quasi-triangular bialgebra, as one would expect, 
but a quasi-triangular quasi-bialgebra. 
We shall now prove the following useful lemma.
\begin{lemma}{\label{basicla}} 
Let  $\check r: V \otimes V\to 
V \otimes V$ be a non-degenerate, non-involutive, set-theoretic solution of the braid equation,  
$\check r = \sum_{x,y \in X} e_{x, \sigma_{x}(y)} \otimes e_{y, \tau_y(x)}.$ Suppose that for every $x\in X,$ the elements  
\begin{equation}
{\mathbb V}_{x} = \sum_{y \in X} e_{\sigma_{x}(y), y}, \quad
{\mathbb W}_x =\sum_{\eta\in X}e_{\tau_x(\eta), \eta}, 
\end{equation}
satisfy,
\begin{equation}
\Delta({\mathbb V}_{\eta}) = \sum_{x,y\in X}e_{\sigma_{\eta}(x), x} \otimes
e_{\sigma_{\tau_{x}(\eta)}(y), y}, \quad \Delta({\mathbb W}_{y}) = 
\sum_{\eta, x \in X}  e_{\tau_{\sigma_x(y)}(\eta), \eta} \otimes e_{\tau_{y(x), x}}.
\label{twistdelta}
\end{equation}
 Then$\ \Delta({\mathbb Y}_x) \check r = 
\check r\Delta({\mathbb Y}_x)  , \quad 
{\mathbb Y}_x \in \{ {\mathbb V}_x, {\mathbb W}_x\}. $
\end{lemma}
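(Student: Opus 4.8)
The plan is to verify the two commutation relations $\Delta(\mathbb{V}_\eta)\check r = \check r\,\Delta(\mathbb{V}_\eta)$ and $\Delta(\mathbb{W}_y)\check r = \check r\,\Delta(\mathbb{W}_y)$ by direct computation in the matrix-unit algebra, using only the multiplication rule $e_{a,b}e_{c,d}=\delta_{b,c}e_{a,d}$, the bijectivity of the maps $\sigma_x,\tau_y$ (non-degeneracy), and the braid constraints (\ref{C1})--(\ref{C3}). The two statements are entirely parallel, so I would carry out the $\mathbb{V}$ case in full and then indicate the mirror-image steps for $\mathbb{W}$.

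The key structural observation is that the two orderings of each product behave very differently. Writing $\Delta(\mathbb{V}_\eta)=\sum_{a,b}e_{\sigma_\eta(a),a}\otimes e_{\sigma_{\tau_a(\eta)}(b),b}$ and $\check r=\sum_{x,y}e_{x,\sigma_x(y)}\otimes e_{y,\tau_y(x)}$, the product $\Delta(\mathbb{V}_\eta)\check r$ collapses immediately, because the ``plain'' column labels $a,b$ of $\Delta(\mathbb{V}_\eta)$ meet the plain row labels $x,y$ of $\check r$ and force $a=x$, $b=y$, giving
\[
\Delta(\mathbb{V}_\eta)\check r=\sum_{x,y}e_{\sigma_\eta(x),\sigma_x(y)}\otimes e_{\sigma_{\tau_x(\eta)}(y),\tau_y(x)}.
\]
In the opposite order $\check r\,\Delta(\mathbb{V}_\eta)$ the composite column labels of $\check r$ meet the composite row labels of $\Delta(\mathbb{V}_\eta)$, so one is left with the nontrivial constraints $\delta_{\sigma_x(y),\sigma_\eta(a)}$ and $\delta_{\tau_y(x),\sigma_{\tau_a(\eta)}(b)}$ rather than a clean collapse.

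The heart of the argument is to solve these constraints and recover the displayed sum. I would introduce new summation variables $p,q$ via $x=\sigma_\eta(p)$ and $a=\sigma_p(q)$, legitimate since $\sigma_\eta,\sigma_p$ are bijections. Applying (\ref{C1}) to rewrite $\sigma_\eta(a)=\sigma_\eta(\sigma_p(q))$ turns the first constraint into $\sigma_{\sigma_\eta(p)}(y)=\sigma_{\sigma_\eta(p)}(\sigma_{\tau_p(\eta)}(q))$, whence $y=\sigma_{\tau_p(\eta)}(q)$ by injectivity; feeding this into the second constraint, applying (\ref{C3}) to $\tau_y(x)$ and using injectivity of $\sigma$ then forces $b=\tau_q(p)$. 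Substituting the solved values back shows $\check r\,\Delta(\mathbb{V}_\eta)$ equals the same sum as $\Delta(\mathbb{V}_\eta)\check r$. For $\mathbb{W}$ I would run the reflected computation: left multiplication by $\Delta(\mathbb{W}_y)$ again collapses cleanly, to $\sum_{\mu,\nu}e_{\tau_{\sigma_\nu(y)}(\mu),\sigma_\mu(\nu)}\otimes e_{\tau_y(\nu),\tau_\nu(\mu)}$, while $\check r\,\Delta(\mathbb{W}_y)$ leaves two constraints; I would verify that the candidate index $(s,t)=(\tau_{\sigma_\nu(y)}(\mu),\tau_y(\nu))$ with the matching column labels satisfies both deltas, the first reducing to (\ref{C3}) under $(\eta,x)\mapsto(\mu,\nu)$ and the second to (\ref{C2}) under $(x,\eta)\mapsto(\nu,\mu)$, and that $(\mu,\nu)\mapsto(s,t)$ is a bijection of summation variables by non-degeneracy.

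The main obstacle is purely the index bookkeeping in the ``hard'' orderings $\check r\,\Delta(\mathbb{Y}_\eta)$: one must choose exactly the right substitution so that a single braid relation applies, and then invert the remaining nonlinear relation by bijectivity. The conceptual content is light—everything follows from (\ref{C1})--(\ref{C3})—but the substitutions are delicate, and the only real insight needed is recognizing which braid relation governs which tensor leg: in the $\mathbb{V}$-case (\ref{C1}) handles the first leg and (\ref{C3}) the second, whereas in the $\mathbb{W}$-case (\ref{C3}) handles the first leg and (\ref{C2}) the second.
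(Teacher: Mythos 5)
Your proposal is correct: both collapse computations, the change of variables in the $\mathbb{V}$-case, and the verification via (\ref{C1}), (\ref{C3}) (first leg) and (\ref{C3}), (\ref{C2}) ($\mathbb{W}$-case) all check out, and non-degeneracy is invoked exactly where it is needed to invert the substitutions and to guarantee the parametrised terms exhaust the constrained sum. However, the paper organises the computation differently, and in a way that is worth knowing: instead of commuting $\check r$ with $\Delta(\mathbb{Y}_x)$ directly, it commutes $\check r$ with the inverses $\Delta(\mathbb{Y}_x^{-1})=\Delta(\mathbb{Y}_x)^T$ (these coproducts are permutation matrices by non-degeneracy, so inverse equals transpose, and $[\check r,\Delta(\mathbb{Y}_x)^{-1}]=0$ is equivalent to $[\check r,\Delta(\mathbb{Y}_x)]=0$). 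The point of this transposition trick is that the ``plain'' and ``composite'' labels swap roles: for the transposed element \emph{both} products $\check r\,\Delta(\mathbb{V}_x^{-1})$ and $\Delta(\mathbb{V}_x^{-1})\,\check r$ collapse immediately to single sums over two free indices, and equality is then read off leg by leg as exactly (\ref{C1}) and (\ref{C3}) (respectively (\ref{C2}) and (\ref{C3}) for $\mathbb{W}$), with no Kronecker-delta constraints left to solve. So the paper's route buys a shorter, symmetric computation at the cost of one structural observation (invertibility and inverse-equals-transpose), whereas your route is more self-contained but pushes all the work into the ``hard'' ordering, where you must choose the right substitution and appeal to injectivity twice; the two arguments use the same constraints in the same pairings and are otherwise equivalent in content.
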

\begin{proof} 
It is computationally easier to show that $\big [\Delta({\mathbb Y}_x^{-1}),\ \check r\big]  =0$, 
where specifically $\Delta({\mathbb Y}_x^{-1}) = \Delta({\mathbb Y}_x)^T,$ 
(also ${\mathbb Y}_x^{-1}= {\mathbb Y}_x^T$)  and $^T$ denotes transposition.
We compute explicitly
\begin{eqnarray}
\check r \Delta({\mathbb V}_x^{-1}) &=&  \sum_{y,z \in X} e_{y,\sigma_x(\sigma_y(z))}\otimes  
e_{z, \sigma_{\tau_{\sigma_y(z)}(x)}(\tau_z(y))}\nonumber\\
\Delta({\mathbb V}_x^{-1})  \check r &=&  \sum_{y,z \in X} e_{y,\sigma_{\sigma_x(y)}
(\sigma_{\tau_y(x)}(z))}\otimes  e_{z, \tau_{\sigma_{\tau_y(x)}(z)}(\sigma_x(y))}. \nonumber
\end{eqnarray}
Due to conditions (\ref{C1}) and (\ref{C3}) for the set-theoretic solution $\check r$ we conclude that for all $x\in X,$
$$\big[ \check r,\ \Delta({\mathbb V}_x^{-1}) \big] = 0.$$ 

\noindent Similarly, by conditions (\ref{C2}) and (\ref{C3}) we show that for all $y \in X,$ $$\big[\check r, \Delta({\mathbb W}_y^{-1})  \big] 
= 0,$$ and this concludes our proof.
\end{proof}
It is worth noting that the form and the coproduct structure of the  elements ${\mathbb V}_x,\ {\mathbb W}_x$
are inspired by tensor representations of the RTT algebra (\ref{RTT}) (we refer the interested reader to 
\cite{FadTakRes, Doikoutw}). It is also worth recalling at this point the algebra ${\mathcal Q}$ 
generated by $q_x$ $x\in X$ (\ref{qalg}). It turns out that ${\mathbb V}_x$ and ${\mathbb W}_x^T$ 
($^T$ denotes transposition), defined earlier,  are $n$-dimensional representations of 
${\mathcal Q},$ i.e.  $q_x \mapsto {\mathbb U}_x, $ where ${\mathbb U}_x \in \{ {\mathbb V}_x,\ {\mathbb W}^T_x\},$ 
indeed:
\begin{lemma} The $n\times n$ matrices ${\mathbb V}_x =\sum_{y \in X} e_{\sigma_{x}(y), y}$ 
and ${\mathbb W}^T_x =\sum_{\eta\in X}e_{\eta, \tau_x(\eta)},$ for all $ x \in X,$  satisfy the algebraic relations (\ref{qalg}). 
\end{lemma}
\begin{proof} 
The proof is based on straightforward computation and use of the properties of skew braces:
\begin{eqnarray}
{\mathbb V}_x {\mathbb V}_y = \sum_{z\in X} e_{\sigma_{x}(\sigma_y(z)),z} = 
\sum_{z\in X} e_{\sigma_{x\circ y}(z), z} = {\mathbb V}_{\sigma_x(y)} {\mathbb V}_{\tau_y(x)}, \nonumber
\end{eqnarray}
and similarly, ${\mathbb W}^T_x {\mathbb W}^T_y = {\mathbb W}^T_{\sigma_x(y)} {\mathbb W}^T_{\tau_y(x)}.$
\end{proof}
\noindent Also, the coproducts $\Delta({\mathbb V}_x),\Delta({\mathbb W}^T_x)$ defined in (\ref{twistdelta}),  satisfy the algebraic relations (\ref{qalg}), i.e.  $\Delta$ is indeed an algebra homorphism.

We may now extend the results of \cite{Doikoutw, DOGHVL}  to the non-involutive case.
\begin{proposition} {\label{propg1}}(Proposition \cite{Doikoutw})
We recall the following quantities introduced in \cite{Doikoutw}:
\begin{eqnarray}
F_{1,23} = \sum_{x, y, \eta \in X} e_{\eta, \eta} \otimes e_{\sigma_{\eta}(x), x}
 \otimes e_{\sigma_{\tau_x(\eta)}(y), y} , \quad F^*_{12,3} =\sum_{x, y, \eta \in X} 
e_{\eta, \eta} \otimes e_{x,x}\ 
\otimes e_{\sigma_{\eta}(\sigma_{x}(y)), y} \nonumber 
\label{cop2a}
\end{eqnarray}
\begin{eqnarray}
\hat F^*_{1,23} = \sum_{x, y, \eta \in X} e_{\tau_y(\tau_x(\eta)), \eta} \otimes e_{x,x}
 \otimes e_{y,y}, \quad \hat F_{12,3} =\sum_{x, y, \eta \in X} e_{\tau_{\sigma_x(y)}(\eta), \eta} 
\otimes e_{\tau_y(x), x }\
\otimes e_{y, y}, \nonumber \label{cop2ab}
\end{eqnarray}
subject to \ref{C1})-(\ref{C3}).  Let also $\check r: V \otimes V\to
V \otimes V$ be the set-theoretic solution of the braid equation $\check r = \sum_{x, y \in X } 
e_{x, \sigma_{x}(y)} \otimes e_{y, \tau_{y}(x)}$, then 
\begin{eqnarray}
\check r_{12} F^*_{12,3} = F^*_{12,3} \check r_{12},
\quad  \check r_{23} F_{1,23} = F_{1,23}\check r_{23} \quad \& \quad \check r_{12} \hat F_{12,3} = 
\hat F_{12,3} \check r_{12},  
\quad  \check r_{23} \hat F^*_{1,23} = F^*_{1,23}\check r_{23}. \label{comut1} \nonumber
\end{eqnarray}
\end{proposition}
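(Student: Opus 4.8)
The plan is to treat all four relations as equalities of operators on $V\otimes V\otimes V$ and to prove them by the same elementary device: expand each product using the matrix-unit rule $e_{a,b}e_{c,d}=\delta_{b,c}e_{a,d}$ applied leg by leg, resolve the Kronecker deltas, and compare the two resulting sums of elementary matrices term by term. Because the outer tensor leg that $\check r_{12}$ or $\check r_{23}$ does not act upon carries only an identity, the coefficient comparison in each case collapses to a single functional identity among the maps $\sigma$ and $\tau$, which I will recognise as one of the fundamental braid constraints (\ref{C1})--(\ref{C3}).

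First I would dispose of the two relations in which $\check r$ acts precisely on the two legs carrying a full coproduct while the remaining leg is diagonal. Writing $F_{1,23}=\sum_{\eta}e_{\eta,\eta}\otimes\Delta(\mathbb{V}_{\eta})$ and $\hat F_{12,3}=\sum_{y}\Delta(\mathbb{W}_{y})\otimes e_{y,y}$, and recalling $\check r_{23}=\id\otimes\check r$, $\check r_{12}=\check r\otimes\id$, the diagonal leg simply factors out, so that
$$\check r_{23}F_{1,23}-F_{1,23}\check r_{23}=\sum_{\eta}e_{\eta,\eta}\otimes\big(\check r\,\Delta(\mathbb{V}_{\eta})-\Delta(\mathbb{V}_{\eta})\,\check r\big),$$
and analogously for $\hat F_{12,3}$, with the diagonal leg now on the third factor. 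Both right-hand sides vanish by Lemma \ref{basicla}, which is exactly the statement that $\Delta(\mathbb{V}_{\eta})$ and $\Delta(\mathbb{W}_{y})$ commute with $\check r$. This settles $\check r_{23}F_{1,23}=F_{1,23}\check r_{23}$ and $\check r_{12}\hat F_{12,3}=\hat F_{12,3}\check r_{12}$ with no further computation.

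For the two starred objects $F^{*}_{12,3}$ and $\hat F^{*}_{1,23}$ the diagonal legs sit in the slots where $\check r$ acts, so the above factorisation is unavailable and a direct index chase is needed. For the first I would compute
$$\check r_{12}F^{*}_{12,3}=\sum_{a,b,y}e_{a,\sigma_{a}(b)}\otimes e_{b,\tau_{b}(a)}\otimes e_{\sigma_{\sigma_{a}(b)}(\sigma_{\tau_{b}(a)}(y)),\,y},\qquad F^{*}_{12,3}\check r_{12}=\sum_{a,b,y}e_{a,\sigma_{a}(b)}\otimes e_{b,\tau_{b}(a)}\otimes e_{\sigma_{a}(\sigma_{b}(y)),\,y},$$
so that equality is equivalent to $\sigma_{\sigma_{a}(b)}(\sigma_{\tau_{b}(a)}(y))=\sigma_{a}(\sigma_{b}(y))$ for all $a,b,y$, which is precisely (\ref{C1}). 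The same computation for $\hat F^{*}_{1,23}=\sum_{x,y,\eta}e_{\tau_{y}(\tau_{x}(\eta)),\eta}\otimes e_{x,x}\otimes e_{y,y}$, where $\check r_{23}=\id\otimes\check r$ now acts on the two diagonal legs, reduces the identity to $\tau_{\tau_{b}(a)}(\tau_{\sigma_{a}(b)}(\eta))=\tau_{b}(\tau_{a}(\eta))$, which is (\ref{C2}); here I read the right-hand side of the last relation in the statement as $\hat F^{*}_{1,23}$, correcting an evident typographical slip.

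I expect the only real obstacle to be bookkeeping: one must track, through each leg, which index is pinned by the preceding Kronecker delta and how it is then substituted into the next slot, so that the surviving argument of $\sigma$ (respectively $\tau$) in the non-diagonal leg is assembled correctly and in the right order. The payoff is that, once the dust settles, each starred relation lands exactly on (\ref{C1}) or (\ref{C2}) with matching outer maps, so that no appeal to injectivity is required; the bijectivity of $\sigma_{x},\tau_{y}$ from Remark \ref{rem1} is nevertheless available, should one instead prefer to strip a common outer $\sigma$ or $\tau$ from both sides. Since $\check r$ satisfies the constant braid equation and hence (\ref{C1})--(\ref{C3}), all four commutation relations follow.
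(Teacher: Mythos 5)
Your proof is correct and follows essentially the same route as the paper's: the commutation relations for $F_{1,23}$ and $\hat F_{12,3}$ are reduced to Lemma \ref{basicla} via the identifications $F_{1,23}=(\mbox{id}\otimes\Delta)F$ and $\hat F_{12,3}=(\Delta\otimes\mbox{id})\hat F$ (your ``diagonal leg factors out'' argument), while the starred relations are checked by explicit matrix-unit computation that lands exactly on (\ref{C1}) and (\ref{C2}). Your reading of the final relation as a typographical slip for $\hat F^*_{1,23}$ is also the intended one.
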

\begin{proof}
The proof goes along the same lines as in \cite{Doikoutw}.  Indeed, we first observe,  recalling 
Lemma \ref{basicla} and the definition of $F,$
 that $F_{1,23} = (\mbox{id} \otimes \Delta)F,$ whereas $F_{12,3}^* \neq (\Delta \otimes\mbox{id})F$. 
Hence, we immediately conclude that $\big  [F_{1,23}, \ \check r_{23}\big ]  =0;$ also by explicit 
computation and bearing in mind (\ref{C1}) we show that $\big  [F_{12,3}^*, \ \check r_{12}\big ]  =0.$ 

Similarly,  due to Lemma \ref{basicla} and the definition of $\hat F,$ 
$\hat F_{12,3} = (\Delta \otimes \mbox{id})\hat F,$ 
whereas $\hat F^*_{1,23} \neq (\mbox{id} \otimes \Delta)\hat F.$ Then it immediately follows 
that $\big  [\hat F_{12,3}, \ \check r_{12}\big ]  =0;$
also by explicit computation and  by recalling (\ref{C2}) we conclude that 
$\big  [\hat F^*_{1,23}, \ \check r_{23}\big ]  =0,$ (see also relevant Lemma \ref{remdr}).
\end{proof}

The admissibility of the twist for involutive solutions was  proven in Proposition 3.15 in 
\cite{Doikoutw}.  In the following proposition we generalize this result
in the non-involutive scenario.
\begin{proposition}{\label{cocycle}}(Proposition \cite{Doikoutw})
 Let ${\mathcal T}_{12} ={\mathcal T}  \otimes \mbox{id},$ 
${\mathcal T}_{23} =\mbox{id} \otimes {\mathcal T},$ where ${\mathcal T} \in \{F,\ \hat F\}$ and
$F, \ \hat F$ are given in (\ref{twistsa}).
Let also  $F^*_{12,3}, F_{1,23}$ and $\hat F_{12,3},  \hat F^{*}_{1,23}$ defined in Proposition \ref{propg1}. Then 
\begin{equation}
F_{123}:=F_{12} F^*_{12,3} =F_{23}F_{1,23}, \qquad \hat F_{123}:=\hat F_{12} \hat F_{12,3} 
=\hat F_{23}\hat F^*_{1,23}.\nonumber
\end{equation}
\end{proposition}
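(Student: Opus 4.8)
The plan is to prove both cocycle-type identities by direct matrix computation in the basis $\{e_{x,y}\}_{x,y\in X}$ of $\mbox{End}(V)$, using nothing beyond the elementary composition rule $e_{a,b}\,e_{c,d}=\delta_{b,c}\,e_{a,d}$, the explicit forms of the four quantities recorded in Proposition \ref{propg1}, and the twists $F,\hat F$ of (\ref{twistsa}). The substance of the statement is that each identity is merely a repackaging of one of the braid constraints: the first of (\ref{C1}) and the second of (\ref{C2}). Throughout I would expand the twists as $F=\sum_{x,y\in X}e_{x,x}\otimes e_{\sigma_x(y),y}$ and $\hat F=\sum_{x,y\in X}e_{\tau_y(x),x}\otimes e_{y,y}$, obtained by writing out ${\mathbb V}_x$ and ${\mathbb W}_y$.

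For the first identity I would expand $F_{12}=\sum_{x,y}e_{x,x}\otimes e_{\sigma_x(y),y}\otimes\mbox{id}$ and multiply on the right by $F^*_{12,3}$. Composing leg by leg, the Kronecker deltas in the first two legs (the outer index of $F_{12}$ matches the first leg of $F^*_{12,3}$, its middle index matches the second leg) collapse the double sum to
$$F_{12}F^*_{12,3}=\sum_{\eta,x,y\in X}e_{\eta,\eta}\otimes e_{\sigma_\eta(x),x}\otimes e_{\sigma_\eta(\sigma_x(y)),y}.$$
The analogous computation for $F_{23}F_{1,23}$, where now the deltas act in the second and third legs and fix the intermediate indices to $\sigma_\eta(x)$ and $\sigma_{\tau_x(\eta)}(y)$, yields
$$F_{23}F_{1,23}=\sum_{\eta,x,y\in X}e_{\eta,\eta}\otimes e_{\sigma_\eta(x),x}\otimes e_{\sigma_{\sigma_\eta(x)}(\sigma_{\tau_x(\eta)}(y)),y}.$$
The first two tensor legs agree term by term, so the two matrices coincide if and only if $\sigma_\eta(\sigma_x(y))=\sigma_{\sigma_\eta(x)}(\sigma_{\tau_x(\eta)}(y))$ for all $\eta,x,y\in X$, which is exactly condition (\ref{C1}). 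This gives $F_{12}F^*_{12,3}=F_{23}F_{1,23}$.

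For the second identity the computation is the transpose/dual of the first. Expanding $\hat F_{12}=\sum_{x,y}e_{\tau_y(x),x}\otimes e_{y,y}\otimes\mbox{id}$ and multiplying by $\hat F_{12,3}$, the deltas in the first two legs collapse the sum to
$$\hat F_{12}\hat F_{12,3}=\sum_{\eta,x,y\in X}e_{\tau_{\tau_y(x)}(\tau_{\sigma_x(y)}(\eta)),\eta}\otimes e_{\tau_y(x),x}\otimes e_{y,y},$$
whereas the same procedure applied to $\hat F_{23}\hat F^*_{1,23}$ gives
$$\hat F_{23}\hat F^*_{1,23}=\sum_{\eta,x,y\in X}e_{\tau_y(\tau_x(\eta)),\eta}\otimes e_{\tau_y(x),x}\otimes e_{y,y}.$$
Here the last two legs match identically, and equality of the first legs amounts to $\tau_y(\tau_x(\eta))=\tau_{\tau_y(x)}(\tau_{\sigma_x(y)}(\eta))$, which is precisely condition (\ref{C2}). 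This establishes the remaining identity and completes the proof.

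The computation is routine; the only real obstacle is the bookkeeping — keeping straight which leg each Kronecker delta acts on, so that the correct intermediate index survives ($\sigma_\eta(x)$ in the second leg of the first identity, $\tau_y(x)$ in the second leg of the second), and then matching the surviving third-leg (respectively first-leg) entry against the correct braid relation. It is worth emphasizing that only (\ref{C1}) is needed for the pairing built from $F$ and only (\ref{C2}) for the one built from $\hat F$; the mixed constraint (\ref{C3}) plays no role here, having already been used in Lemma \ref{basicla} and Proposition \ref{propg1} to guarantee that these quantities are the correct coproducts commuting with $\check r$.
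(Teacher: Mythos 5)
Your proof is correct and takes essentially the same approach as the paper's: direct substitution of the explicit matrix forms of the twists, leg-by-leg composition via $e_{a,b}e_{c,d}=\delta_{b,c}e_{a,d}$, and term-by-term identification using the braid constraints. Your bookkeeping is in fact slightly sharper than the paper's, which cites conditions (\ref{C1})--(\ref{C3}) wholesale, whereas you correctly isolate that only (\ref{C1}) is needed for the $F$-identity and only (\ref{C2}) for the $\hat F$-identity.
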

\begin{proof}
\noindent The proof again goes along the same lines as in \cite{Doikoutw}.
By substituting the expressions for $F_{12},\ F_{23}$,  $F^*_{12,3}$ and $F_{1,23}$ 
and recalling that conditions (\ref{C1})-(\ref{C3}) hold, we obtain,
\begin{equation}
F_{123}:=F_{12} F^*_{12,3} =F_{23}F_{1,23}= \sum_{\eta,x,y \in X}  
e_{\eta, \eta} \otimes e_{\sigma_{\eta}(x), x } \otimes   e_{\sigma_{\eta}(\sigma_{x}(y)),y}. \nonumber
\end{equation}
Similarly,  from expressions  $\hat F_{12},\ \hat F_{23}$, $\hat F_{12,3}$ and $\hat F^*_{1,23}$ 
and using  conditions (\ref{C1})-(\ref{C3}) we obtain,
\begin{equation}
\hat F_{123}:=\hat F_{12} \hat F_{12,3} =\hat F_{23}\hat F^*_{1,23}= \sum_{\eta,x,y \in X}  
e_{\tau_y(\tau_x(\eta)), \eta} \otimes e_{\tau_y(x), x } \otimes e_{y,y}. \nonumber
\hfill \qedhere
\end{equation}
\end{proof}
\begin{remark}{\label{rem4}}
We should note that no matter what the action of the counit on ${\mathbb V}_x,\ {\mathbb W}_x$ 
is, we deduce that $(\mbox{id} \otimes \epsilon) F = \sum_{\eta \in X} \epsilon({\mathbb V}_{\eta})e_{\eta, \eta}$ 
(it becomes the $n\times n$ identity matrix $1_n$ if 
$\epsilon({\mathbb V}_{\eta}) =1,$ for all $ \eta \in X$)  and 
$(\epsilon \otimes \mbox{id}) F  = \sum_{\eta \in X} \epsilon(e_{\eta, \eta}) {\mathbb V}_{\eta} \neq 1_n.$ 
Similarly, $(\epsilon \otimes \mbox{id}) \hat F  =\sum_{x \in X} \epsilon({\mathbb W}_{x})e_{x,x}$ 
(it becomes the $n\times n$ identity matrix if $\epsilon({\mathbb W}_x) =1,$ for all $ x \in X$) and 
$(\mbox{id} \otimes \epsilon) \hat F = \sum_{x\in X} \epsilon(e_{x,x}) {\mathbb V}_x\neq 1_n.$
\end{remark}

\begin{remark} Propositions \ref{propg1}, \ref{cocycle} are essential in showing that if $\check r$ 
is a solution of the braid equation, 
then $\check r_F$ also is (see e.g.  \cite{Drin, Doikoutw} and Proposition \ref{twist}).  Indeed,  if $\check r$ 
satisfies the braid equation, then 
by acting from the left of the braid equation with $F_{123}$ and from the right with $F_{123}^{-1}$ 
and recalling that $F\check r F^{-1} = \check r_F,$ as well as Propositions \ref{propg1}, \ref{cocycle}, 
we conclude that $\check r_F$ also satisfies the braid equation.
\end{remark}

We now identify the twisted $\check r$-matrices, which are also solutions of the braid equation, 
as well as the  associated twisted coproducts.
\begin{lemma}{\label{basiclb}} 
Let  $\check r: V \otimes V\to V \otimes V$ be the set-theoretic solution of the braid equation.
Let also ${\mathbb V}_{x} = \sum_{y \in X} e_{\sigma_{x}(y), y}$ and 
${\mathbb W}_x =\sum_{\eta\in X}e_{\tau_x(\eta), \eta}, $ for all $ x \in X,$  
with coproducts defined in (\ref{twistdelta}).
Then $\Delta_{\mathcal T}({\mathbb Y}_x) \check r_{\mathcal T} = 
\check r_{\mathcal T} \Delta_{\mathcal  T}({\mathbb Y}_x),$
where  ${\mathbb Y}_x \in \{ {\mathbb V}_x, \ {\mathbb W}_x\},$ ${\mathcal T} \in \{ F, \ \hat F \},$ 
\begin{equation}
\Delta_F({\mathbb V}_x) = {\mathbb V}_x \otimes {\mathbb V}_x,  \quad \Delta_F({\mathbb W}_{y}) 
= \sum_{\eta, x\in X} e_{\tau_{\sigma_x(y)}(\eta), \eta} \otimes 
e_{\tau_{\sigma_{\tau_x(\eta)}(y)}(\sigma_{\eta}(x)),\sigma_{\eta}(x)}\label{delta} \nonumber
\end{equation}
\begin{equation}
\Delta_{\hat F}({\mathbb V}_{\eta}) = \sum_{x, y\in X}
e_{\sigma_{\tau_{\sigma_x(y)}(\eta)}(\tau_y(x)), \tau_y(x)}\otimes e_{\sigma_{\tau_x(\eta)}(y), y}  \quad 
\Delta_{\hat F}({\mathbb W}_{y}) = {\mathbb W}_y \otimes {\mathbb W}_y \label{delta2} \nonumber
\end{equation}
and the twisted matrices read as
\begin{equation}
\check r_F = \sum_{x, y \in X} e_{x, \sigma_x(y)}\otimes e_{\sigma_x(y), \sigma_{\sigma_x(y)}(\tau_y(x))} \quad \&  \quad
\check r_{\hat F} = \sum_{x, y \in X}e_{\tau_y(x), \tau_{\tau_y(x)}(\sigma_{x}(y))} \otimes e_{y, \tau_y(x)}. \nonumber
\end{equation}
\end{lemma}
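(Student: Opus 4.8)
The whole statement rests on the fact, recorded in Proposition \ref{twist}, that twisting by ${\mathcal T}\in\{F,\hat F\}$ is implemented by conjugation, $\check r_{\mathcal T}={\mathcal T}\,\check r\,{\mathcal T}^{-1}$ and $\Delta_{\mathcal T}({\mathbb Y}_x)={\mathcal T}\,\Delta({\mathbb Y}_x)\,{\mathcal T}^{-1}$. Granting this, the commutation assertion is immediate and I would dispose of it first: conjugating the identity $\Delta({\mathbb Y}_x)\check r=\check r\Delta({\mathbb Y}_x)$ of Lemma \ref{basicla} by ${\mathcal T}$ yields
\[
\Delta_{\mathcal T}({\mathbb Y}_x)\,\check r_{\mathcal T}={\mathcal T}\,\Delta({\mathbb Y}_x)\,\check r\,{\mathcal T}^{-1}={\mathcal T}\,\check r\,\Delta({\mathbb Y}_x)\,{\mathcal T}^{-1}=\check r_{\mathcal T}\,\Delta_{\mathcal T}({\mathbb Y}_x),
\]
for every $x\in X$ and ${\mathbb Y}_x\in\{{\mathbb V}_x,{\mathbb W}_x\}$, so this part needs no separate computation.

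Next I would record the inverses $F^{-1}=\sum_{x}e_{x,x}\otimes{\mathbb V}_x^{-1}$ and $\hat F^{-1}=\sum_{y}{\mathbb W}_y^{-1}\otimes e_{y,y}$, with ${\mathbb V}_x^{-1}=\sum_{y}e_{y,\sigma_x(y)}$ and ${\mathbb W}_x^{-1}=\sum_{\eta}e_{\eta,\tau_x(\eta)}$; these follow at once from ${\mathbb V}_x{\mathbb V}_x^{-1}=1$ and $e_{x,x}e_{x',x'}=\delta_{x,x'}e_{x,x}$. The twisted matrices are then obtained by evaluating $\check r_F=F\check r F^{-1}$ and $\check r_{\hat F}=\hat F\check r\hat F^{-1}$ using only the matrix-unit rule $e_{p,q}e_{r,s}=\delta_{q,r}e_{p,s}$: the diagonal leg $e_{x,x}$ of $F$ (resp. $\hat F$) fixes the summation index and collapses the products, and a relabelling of the two free indices yields the stated closed forms. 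Notably no braid constraint is needed for the $r$-matrices; this step is routine bookkeeping.

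The substance of the lemma is the computation of the four twisted coproducts from $\Delta_{\mathcal T}({\mathbb Y}_x)={\mathcal T}\Delta({\mathbb Y}_x){\mathcal T}^{-1}$, starting from the expressions (\ref{twistdelta}). Performing the same matrix-unit algebra on $\Delta({\mathbb V}_\eta)$, the left leg is untouched (it only meets the diagonal parts of $F,F^{-1}$) while the right leg becomes $e_{\sigma_{\sigma_\eta(x)}(\sigma_{\tau_x(\eta)}(y)),\,\sigma_x(y)}$; the nested composition $\sigma_{\sigma_\eta(x)}(\sigma_{\tau_x(\eta)}(y))$ collapses to $\sigma_\eta(\sigma_x(y))$ by the braid constraint (\ref{C1}), and after the substitution $y'=\sigma_x(y)$ (a bijection for each fixed $x$) this decouples the $y$-sum and gives exactly ${\mathbb V}_\eta\otimes{\mathbb V}_\eta$. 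Symmetrically $\Delta_{\hat F}({\mathbb W}_y)={\mathbb W}_y\otimes{\mathbb W}_y$ follows from (\ref{C2}) — each generator is rendered group-like by the twist built from it. For the two mixed coproducts $\Delta_F({\mathbb W}_y)$ and $\Delta_{\hat F}({\mathbb V}_\eta)$ the same reduction leaves a mixed $\sigma$--$\tau$ composition in the surviving leg, e.g. $\sigma_{\tau_{\sigma_x(y)}(\eta)}(\tau_y(x))$, which is rewritten via (\ref{C3}) to match the form asserted in the statement; these do not become group-like.

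The main obstacle is organisational rather than conceptual and lies entirely in the last paragraph: one must track several layered $\sigma$/$\tau$ compositions through the conjugation and identify which of (\ref{C1})--(\ref{C3}) collapses each one, the crucial recognitions being the group-like patterns forced by (\ref{C1}) and (\ref{C2}) and the mixed identity (\ref{C3}). Once the diagonal legs of $F$ and $\hat F$ are used to pin the summation indices, every step reduces to the single rule $e_{p,q}e_{r,s}=\delta_{q,r}e_{p,s}$ followed by one braid constraint, and all six claimed formulas fall out.
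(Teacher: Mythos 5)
Your proposal is correct and follows essentially the same route as the paper, whose proof is simply the statement that everything follows "by explicit computation" from the definitions of ${\mathbb V}_x,\ {\mathbb W}_x,\ F,\ \hat F,\ \check r$, the coproducts (\ref{twistdelta}), and conditions (\ref{C1})--(\ref{C3}); your write-up just organizes that computation cleanly (conjugation by the twist, matrix-unit algebra, then one braid constraint per coproduct). The only refinement you add is deducing the commutation assertion formally by conjugating the identity of Lemma \ref{basicla} rather than recomputing it, which is a nice economy but not a different method.
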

\begin{proof} The proof immediately follows after recalling expressions ${\mathbb V}_x,  {\mathbb W}_x,$ 
$ F,  \hat F, $ $\check r,$ $\Delta({\mathbb V}_{x}),$ $\Delta({\mathbb W}_{x}),$ conditions (\ref{C1})-(\ref{C3}) 
and by explicit computation. 
\end{proof}
\begin{remark} Notice that in the involutive case both twisted $\check r$-matrices reduce to 
the permutation operator as expected. 
Also, it follows from Lemma \ref{basiclb} that when we twist with $F$ the invertible elements 
${\mathbb V}_x \in \mbox{End}(V)$ form a family of group like elements,
i.e.  $\Delta_F({\mathbb V}_x) = {\mathbb V}_x \otimes {\mathbb V}_x$ and 
$\epsilon({\mathbb V}_x) = 1,$ for all $ x\in X.$ Similarly,  when twisting with $\hat F$ the invertible elements  
${\mathbb W}_x \in \mbox{End}(V)$ form a family of group like elements,  i.e.
 $\Delta_{\hat F}({\mathbb W}_{x}) = {\mathbb W}_x \otimes {\mathbb W}_x$ 
and $\epsilon({\mathbb W}_x) = 1,$ for all $ x\in X$ (see Remark \ref{rem4}).
\end{remark}

\begin{remark}
Recall that $r = {\mathcal P} \check r$ 
is the set-theoretic solution of the QYBE. The established equalities $\epsilon({\mathbb V}_x) =\epsilon({\mathbb W}_x) = 1,$ for all $x\in X,$ and the coproducts (\ref{twistdelta}) given in Lemma \ref{basicla}, imply the following:
\begin{enumerate}
\item the coproduts (\ref{twistdelta}) are not coassociative.
\item $(\epsilon \otimes \mbox{id}) \Delta({\mathbb V}_{\eta}) = \sum_{x \in X}\epsilon(e_{\sigma_{\eta}(x), x})
{\mathbb V}_{\tau_{x}(\eta)},$ $\ (\mbox{id} \otimes \epsilon)\Delta({\mathbb V}_{\eta}) = {\mathbb V}_{\eta}$ 
and\\ $(\epsilon \otimes \mbox{id}) \Delta({\mathbb W}_{y}) = {\mathbb W}_{y}, $ $\ (\mbox{id} 
\otimes \epsilon)\Delta({\mathbb W}_{y}) = \sum_{x \in X}\epsilon(e_{\tau_{y}(x), x}) {\mathbb W}_{\sigma_{x}(y)}.$
\item $(\Delta \otimes \mbox{id})r \neq r_{13} r_{23}$ and  $( \mbox{id} \otimes \Delta )r \neq r_{13} r_{12}.$ 
\item  The set-theoretic $r$-matrix satisfies the YBE. 
\end{enumerate}
All the above lead to the conclusion that,  as in the involutive case \cite{DOGHVL}, the underlying 
quantum algebra is a quasi-triangular 
quasi-bialgebra (see also Definitions \ref{definition1}, \ref{definition2} and Proposition \ref{lemma1}).
Similar comments can be made for the twisted $r$-matrices.
\end{remark}
Relevant results on the extension of the results of \cite{Doikoutw}  
about the admissibility of the twist $F$  (\ref{twistsa}) 
to the non-involutive case are presented in \cite{Ghobadi}.
And although an example of a twisted $r$-matrix is also presented neither the issue of the twisted 
coproducts of the families of operators ${\mathbb V}_x, {\mathbb W}_x,$ 
which play a crucial role in characterising  the  associated quantum algebra as a quasi-bialgebra, 
nor the issue of the quantum algebra being a quasi-bialgebra are discussed.
In fact,  it is implicitly regarded  that there is an underlying Hopf structure,  
but as we have shown  this is not the case.

The next important step in this frame is to identify  the associators $\Phi, \ \Phi_{\mathcal T}$ 
and demonstrate the full structure of the underlying quasi-bialgebras for all set-theoretic solutions. 
This will indeed confirm the main conjecture  of \cite{DOGHVL} that the underlying quantum algebra 
for involutive set-theoretic solutions is a quasi-triangular quasi-bialgebra. In the involutive 
case $\Phi_{\mathcal T} = 1\otimes 1 \otimes 1$, so the task in this situation is to identify
 $\Phi,$ but this will be discussed in future works. 
We should also note that the notion of the antipode and the quasi-Hopf algebra is briefly 
discussed for the involutive case in \cite{DOGHVL} for some special set-theoretic solutions,  
however further study in this direction is required.  The Baxterization of the non-involutive solutions and  the derivation of a universal $R$-matrix are among the most challenging questions in this context and will be addressed in future investigations.

\subsection*{Acknowledgments}
\noindent  The authors would like to thank \L ukasz Kubat for comments to the manuscript during ``The algebra of the Yang-Baxter equation" conference in Bedlewo 2022. Support from the EPSRC research grant EP/V008129/1 is acknowledged.

\end{document}